\definecolor{labelkey}{rgb}{1.0, 0.22, 0.0}
\newtheorem{theorem}{Theorem}
\newtheorem{lemma}[theorem]{Lemma}
\newtheorem{proposition}[theorem]{Proposition}
\newtheorem{corollary}[theorem]{Corollary}
\newtheorem{conjecture}{Conjecture}
\numberwithin{equation}{section}
\DeclareMathOperator{\rank}{rank}
\DeclareMathOperator{\corank}{corank}
\DeclareMathOperator{\tors}{tors}
\DeclareMathOperator{\Sel}{Sel}
\newcommand{\rankan}{\rank_{\mathrm{an}}}
\newcommand{\ee}{\mathrm{e}}
\DeclareSymbolFont{cyrletters}{OT2}{wncyr}{m}{n}
\DeclareMathSymbol{\Sha}{\mathalpha}{cyrletters}{"58}
\title[Average rank of quadratic twists]{Average rank of quadratic twists with a rational point of almost minimal height}
\author{Joachim Petit}
\address{Department of Mathematics and Computer Science \\ University of Basel \\ Spiegelgasse 1 \\ 4051 Basel \\ Switzerland}
\email{joachim.petit@unibas.ch}
\keywords{Elliptic curves, rational points, heights}
\subjclass[2010]{11D45, 11G05, 11G50}
\begin{document}

\begin{abstract}
	Given a family of quadratic twists of a fixed elliptic curve defined over $\mathbb{Q}$, we investigate the average rank in the subfamily of twists having a nontorsion rational point of almost minimal height.
	We show in particular that the average analytic rank is greater than one.
\end{abstract}

\maketitle
\tableofcontents
\thispagestyle{empty}

%%%%%%%%%%%%%%%%%%%%%%%%%%%%%%%%%%%%%%%%%%%%%%%%%%%%%%%%%%%%%%%%%
\section{Introduction} \label{sec:intro}
%%%%%%%%%%%%%%%%%%%%%%%%%%%%%%%%%%%%%%%%%%%%%%%%%%%%%%%%%%%%%%%%%

Fix $A,B \in \mathbb{Z}$ satisfying $4A^3 + 27B^2 \neq 0$.
Let $E$ denote the elliptic curve defined over the rationals by the equation
\[ E : y^2 = x^3 + Ax + B, \]
and for every square-free integer $d$, let $E_d$ be the quadratic twist of $E$ defined over the rationals by the equation
\[ E_d : dy^2 = x^ + Ax + B. \]

Write $\rankan(E_d)$ for the analytic rank of $E_d$, that is the order of vanishing of the Hasse--Weil $L$-function $L(E_d,s)$ at $s = 1$, and $\rank(E_d(\mathbb{Q}))$ for the algebraic rank of $E_d$, which is the rank of the finitely generated abelian group $E_d(\mathbb{Q})$.
Recall that the Parity Conjecture asserts that
\[ \rankan(E_d) \equiv \rank(E_d(\mathbb{Q})) \bmod 2, \]
while the Birch and Swinnerton-Dyer Conjecture states that the two ranks are actually equal.
For $X \geq 1$, denote the set of square-free integers up to $X$ by
\[ \mathcal{S}(X) = \{ 1 \leq d \leq X : \mu(d) \neq 0 \}. \]
It has been conjectured by Goldfeld~\cite{MR564926} that the average analytic rank of quadratic twists, when ordered by the size of $d$, is exactly one half.

\begin{conjecture}[Goldfeld] \label{conj:Goldfeld}
	Let $\nu \in \{0,1\}$. One has
	\[ \#\{ d \in \mathcal{S}(X) : \rankan(E_d) = \nu \} \sim \frac12 \#\mathcal{S}(X), \]
	as $X \to \infty$.
\end{conjecture}

Note that the same result is conjectured to hold with the algebraic rank replacing the analytic rank as a consequence of the Birch and Swinnerton-Dyer Conjecture.

Conjecture~\ref{conj:Goldfeld} has been studied extensively for curves $E$ having full rational $2$-torsion, starting with the work of Heath-Brown~\cite{MR1193603}, \cite{MR1292115} on the congruent number curve $E : y^2 = x^3 - x$.
These results were later extended by Kane~\cite{MR3101079} to any curve $E$ with full rational 2-torsion having no rational cyclic subgroup of order four.
For such curves, Smith~\cite{2017arXiv170202325S} has recently shown that
\[ \#\{ 1 \leq d \leq X : \rank(E_d(\mathbb{Q})) \geq 2 \} = o(X), \]
achieving significant progress towards Conjecture~\ref{conj:Goldfeld} for the algebraic rank.
Building upon this result, it was proven by Kriz~\cite{2020arXiv200204767K} that Conjecture~\ref{conj:Goldfeld} holds under the additional assumption that $E$ has complex multiplication by $\mathbb{Q}(i)$ or $\mathbb{Q}(\sqrt2)$.

We also mention that the prediction of Conjecture~\ref{conj:Goldfeld} has since been extended, following the heuristics of Katz and Sarnak~\cite{MR1659828}, to state that amongst all elliptic curves defined over $\mathbb{Q}$, the density of rank zero curves and the density of rank one curves are both one half.
This broader version of the conjecture is supported by the recent work of Bhargava and Shankar~\cite{MR3272925}, \cite{MR3275847}, \cite{2013arXiv1312.7333B}, \cite{2013arXiv1312.7859B}.

Le~Boudec~\cite{MR3788863} has recently investigated the analogue of Conjecture~\ref{conj:Goldfeld} when the twists are ordered using a quantity coming from the geometry of their Mordell--Weil lattice.
Recall that each twist $E_d$ comes equipped with a canonical height function $\hat{h}_{E_d} : E_d(\mathbb{Q}) \to \mathbb{R}_{\geq0}$ (see Section~\ref{sec:thmain} for its definition). Following the work of Le~Boudec~\cite{MR3455753}, we consider the quantity $\eta_d(A,B)$ defined through
\begin{equation} \label{eq:def:eta_d}
	\log\eta_d(A,B) = \min\{ \hat{h}_{E_d}(P) : P \in E_d(\mathbb{Q}) \setminus E_d(\mathbb{Q})_{\tors} \},
\end{equation}
if $\rank(E_d(\mathbb{Q})) \geq 1$, and by $\eta_d(A,B) = \infty$ if $\rank(E_d(\mathbb{Q})) = 0$.
Le~Boudec~\cite{MR3788863} showed that when the twists are ordered by the size of $\eta_d(A,B)$, then the average analytic rank is larger than one, and under the Parity Conjecture the same holds for the algebraic rank.

Remark that if $\eta_d(A,B) < \infty$, then $\rank(E_d(\mathbb{Q})) \geq 1$ which, by the work of Kolyvagin~\cite{MR954295} and Breuil, Conrad, Diamond and Taylor~\cite{MR1839918}, implies that $\rankan(E_d) \geq 1$.
Thus, given the prediction of Conjecture~\ref{conj:Goldfeld}, one might have expected the average rank to be one.

The goal of the present work is to provide a further example of a subfamily of the family of all quadratic twists for which the average analytic rank is also greater than one.
The quantity $\eta_d(A,B)$ defined in \eqref{eq:def:eta_d} satisfies the sharp lower bound (see for instance~\cite[Section~2.2]{MR3455753})
\[ \eta_d(A,B) \gg d^{1/8}, \]
and we are interested in the quadratic twists for which this bound is almost attained.
For $\alpha > 0$, we define the set
\[ \mathcal{D}_{A,B}(\alpha;X) = \{ d \in \mathcal{S}(X) : \eta_d(A,B) \leq d^{1/8+\alpha} \}. \]
We note that the author~\cite[Theorem~1]{2020arXiv200402500P} has recently established an asymptotic formula for the cardinality of this set when $\alpha \in (0,1/120)$. Our main result investigates the average analytic rank of the quadratic twists of $E$ as $d$ runs over $\mathcal{D}_{A,B}(\alpha;X)$.

\begin{theorem} \label{thm:main}
	Let $\alpha \in (0,1/120)$ and assume that the polynomial $x^3+Ax+B$ is irreducible over $\mathbb{Z}$. One has
	\[ \liminf_{X\to\infty} \frac{1}{\#\mathcal{D}_{A,B}(\alpha;X)} \sum_{d\in\mathcal{D}_{A,B}(\alpha;X)} \rankan(E_d) > 1. \]
\end{theorem}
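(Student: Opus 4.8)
Throughout write $\mathcal{D} = \mathcal{D}_{A,B}(\alpha;X)$. The plan is to combine the parity of the analytic rank with the lower bound on the algebraic rank that holds automatically on $\mathcal{D}$. Let $w(E_d)\in\{\pm1\}$ denote the global root number of $E_d$. By the modularity of $E_d$ \cite{MR1839918} the completed $L$-function of $E_d$ satisfies a functional equation relating $s$ to $2-s$ with sign $w(E_d)$, so that $\rank_{\mathrm{an}} E_d(\mathbb{Q})$ is even whenever $w(E_d)=+1$. On the other hand, if $d\in\mathcal{D}$ then $\eta_d(A,B)<\infty$, hence $\rank E_d(\mathbb{Q})\geq1$, and therefore $\rank_{\mathrm{an}} E_d(\mathbb{Q})\geq1$ by Kolyvagin's theorem \cite{MR954295} together with \cite{MR1839918}. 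Combining the two, every $d\in\mathcal{D}$ with $w(E_d)=+1$ has $\rank_{\mathrm{an}} E_d(\mathbb{Q})\geq2$, and so
\[
\sum_{d\in\mathcal{D}}\rank_{\mathrm{an}} E_d(\mathbb{Q}) \ \geq\ \#\mathcal{D} \ +\ \#\{\,d\in\mathcal{D} : w(E_d)=+1\,\}.
\]
Theorem~\ref{thm:main} will therefore follow once we show that a positive proportion of the family has root number $+1$, that is,
\[
\liminf_{X\to\infty}\ \frac{\#\{\,d\in\mathcal{D} : w(E_d)=+1\,\}}{\#\mathcal{D}} \ >\ 0.
\]

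To get at the root number I would start from the parametrisation of $\mathcal{D}$ behind \cite[Theorem~1]{2020arXiv200402500P}: away from a negligible set, a non-torsion point of almost minimal height on $E_d$ corresponds, after putting $E_d$ in integral form, to a pair of coprime integers $(a,c)$ with $\max(|a|,c^2)$ of size $d^{1/3+O(\alpha)}$ for which $d$ equals the squarefree part of $F(a,c):=a^3+Aac^4+Bc^6=c^6f(a/c^2)$, where $f(x)=x^3+Ax+B$. I would then use the explicit formulas for the local root numbers of a quadratic twist: at a prime $p\geq5$ of good reduction for $E$ with $p\parallel d$ one has $w_p(E_d)=\bigl(\tfrac{-1}{p}\bigr)$, whereas at the finitely many remaining places the local root number depends only on $d$ modulo a fixed power of that place. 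Consequently there is a modulus $M=M(A,B)$ and a function $\epsilon\colon\mathbb{Z}/M\mathbb{Z}\to\{-1,0,1\}$ with $w(E_d)=\epsilon(d\bmod M)$ for every squarefree $d$. Because \cite[Theorem~1]{2020arXiv200402500P} is a genuine asymptotic, the method behind it adapts readily to count the $d\in\mathcal{D}$ in a fixed residue class modulo $M$, say $\#\{d\in\mathcal{D}:d\equiv r\bmod M\}=c_r\#\mathcal{D}+o(\#\mathcal{D})$ with $c_r\geq0$ and $\sum_r c_r=1$; one then gets
\[
\sum_{d\in\mathcal{D}}w(E_d) \ =\ \Bigl(\sum_{r\bmod M}\epsilon(r)\,c_r\Bigr)\#\mathcal{D}\ +\ o(\#\mathcal{D}),
\]
so that $\#\{d\in\mathcal{D}:w(E_d)=+1\}=\tfrac12\bigl(1+\sum_r\epsilon(r)c_r\bigr)\#\mathcal{D}+o(\#\mathcal{D})$. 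The proof thus reduces to showing $\sum_r\epsilon(r)c_r>-1$; equivalently, that $\epsilon$ is not identically $-1$ on the set of residue classes $r$ with $c_r\neq0$ actually populated by $\mathcal{D}$.

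This non-constancy is the crux, and the step I expect to be the main obstacle: one must exclude that $w(E_d)=-1$ for all but $o(\#\mathcal{D})$ of the $d\in\mathcal{D}$. Morally it cannot happen, since $\epsilon$ is not a constant function (the root number of $E_d$ genuinely varies with the twisting parameter) and the squarefree parts of the values $F(a,c)$ do not concentrate on a single class modulo $M$; but turning this into a quantitative statement, with the uniformity demanded by the counting method and in a manner compatible both with the squarefree-part operation and with the behaviour at the bad primes, requires real input, and this is where the hypothesis that $x^3+Ax+B$ is irreducible is used. If $f$ factored as $(x-r)g(x)$, then $F(a,c)$ would factor as $(a-rc^2)\,h(a,c)$, and the ensuing correlations in the prime factorisation of $d$ would leave room for a systematic sign; irreducibility of $f$ rules this out, and, combined with an equidistribution estimate for $F(a,c)$ modulo $M$ over coprime pairs $(a,c)$ (which again uses that $f$ is separable and has no factor of special shape), it lets one pin down a residue class $r_0\bmod M$ with $\epsilon(r_0)=+1$ and $c_{r_0}>0$. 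This yields $\sum_r\epsilon(r)c_r>-1$, and the proof is complete.
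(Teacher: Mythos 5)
Your opening reduction is exactly the paper's: since $\rank_{\mathrm{an}}E_d(\mathbb{Q})\geq1$ automatically on $\mathcal{D}_{A,B}(\alpha;X)$ and the parity is governed by the root number, the theorem follows once a positive proportion of the family has $\omega(E_d)=+1$ (the paper's statement \eqref{eq:Omeganu>0}). But the proposal stops there: the positive-proportion claim is precisely the content of the theorem, and you do not prove it. You yourself flag the non-constancy of the sign on the populated residue classes as ``the crux'' and ``the main obstacle,'' and what you offer for it is a heuristic (``morally it cannot happen\dots requires real input''). Moreover the route you sketch --- an asymptotic equidistribution of $\mathcal{D}_{A,B}(\alpha;X)$ in residue classes modulo $M$, i.e.\ $\#\{d\in\mathcal{D}:d\equiv r\}=c_r\#\mathcal{D}+o(\#\mathcal{D})$ --- is strictly harder than what is needed and is not what the paper does. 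The paper never equidistributes $\mathcal{D}$ itself; it introduces the representation function $r_Q(\alpha;d)$ attached to the parametrising polynomial $Q(u,v,w)=vF(u,vw^2)$ and applies Cauchy--Schwarz, \eqref{eq:OmeganuLB}, so that only a \emph{lower} bound for the modified first moment $S_{Q,\nu}(\alpha;X)$ (Proposition~\ref{prop:SQnu}) and an \emph{upper} bound for the second moment $R_Q^{(2)}(\alpha;X)$ (Proposition~\ref{prop:RQ2}) are required, together with the known asymptotic \eqref{eq:ART01} for $\#\mathcal{D}_{A,B}(\alpha;X)$. The sign condition is absorbed painlessly because $\omega(E_d)=\chi_d(-N_E)\,\omega(E)$ depends only on $d\bmod 4N_E$ and takes both values on invertible classes, so it suffices to prove $S_Q(a;\alpha;X)\gg X^{1/2}\log X$ for every invertible residue $a\bmod 4N_E$; this is done by a genuine square-free sieve on $Q(u,v,w)$ over the region $\mathcal{R}_Q(\alpha)$ (Section~\ref{sec:SFS}), not by adapting the counting result of \cite{2020arXiv200402500P} to progressions.

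A second, smaller inaccuracy: you locate the use of the irreducibility of $x^3+Ax+B$ in the sign/equidistribution step, guarding against a factorisation $F(a,c)=(a-rc^2)h(a,c)$. In the paper the hypothesis enters elsewhere, namely in the second moment bound (Lemma~\ref{lem:UB:AQ}): irreducibility forces the twists to have trivial $2$-torsion, which by \cite[Corollary~4.6]{2020arXiv200402500P} guarantees that rational points on the lines of the surface $Q(u_1,v_1,w_1)=Q(u_2,v_2,w_2)$ all satisfy $(u_1:v_1w_1^2)=(u_2:v_2w_2^2)$, so that Salberger's determinant-method bound $\ll_\epsilon B^{13/8+\epsilon}$ applies to the off-diagonal count. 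Without controlling this second moment (and the diagonal term, Lemma~\ref{lem:UB:MQ}), the Cauchy--Schwarz step has no teeth, and no version of your argument closes the gap.
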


We have decided to restrict ourselves to the case of an irreducible polynomial in order to keep the proof concise, but we expect our method to be robust enough to handle the general case.

For a prime number $p$, let $\Sel_{p^\infty}(E_d)$ be the $p^\infty$-Selmer group of $E_d$. There is a short exact sequence
\[ 0 \longrightarrow E_d(\mathbb{Q}) \otimes \mathbb{Q}_p / \mathbb{Z}_p \longrightarrow \Sel_{p^\infty}(E_d) \longrightarrow \Sha(E_d)[p^\infty] \longrightarrow 0, \]
where $\Sha(E_d)$ is the Tate--Shafarevich group of $E_d$ and $\Sha(E_d)[p^\infty]$ is its $p$-primary part.
Let $\corank(\Sel_{p^\infty}(E_d))$ be the number of copies of $\mathbb{Q}_p / \mathbb{Z}_p$ in $\Sel_{p^\infty}(E_d)$. The above short exact sequence implies the inequality
\begin{equation} \label{eq:ineqcorank}
	\corank(\Sel_{p^\infty}(E_d)) \geq \rank(E_d(\mathbb{Q})),
\end{equation}
while the Tate--Shafarevich Conjecture implies that these quantities are actually equal.
By \eqref{eq:ineqcorank}, we have $\corank(\Sel_{p^\infty} (E_d)) \geq 1$ for all $d \in \mathcal{D}_{A,B}(\alpha;X)$. Moreover, the $p$-Parity Theorem of Dokchitser and Dokchitser \cite[Theorem~1.4]{MR2680426} states that $\corank(\Sel_{p^\infty} (E_d))$ and $\rankan(E_d)$ share the same parity.

During the proof of Theorem~\ref{thm:main}, we actually show that there exists a positive proportion of twists with analytic rank even and at least two, which implies the following corollary.

\begin{corollary}
	Under the assumptions of Theorem~\ref{thm:main}, one has
	\[ \liminf_{X\to\infty} \frac{1}{\#\mathcal{D}_{A,B}(\alpha;X)} \sum_{d\in\mathcal{D}_{A,B}(\alpha;X)} \corank(\Sel_{p^\infty} (E_d)) > 1. \]
\end{corollary}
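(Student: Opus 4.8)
The argument runs parallel to that of Theorem~\ref{thm:main}; indeed both statements reduce to the \emph{same} assertion about root numbers. Write $w(E_d) \in \{\pm1\}$ for the sign of the functional equation of $L(E_d,s)$, which is entire by modularity \cite{MR1839918}, so that $w(E_d) = (-1)^{\rank_{\mathrm{an}} E_d(\mathbb{Q})}$. For $d \in \mathcal{D}_{A,B}(\alpha;X)$ we have $\eta_d(A,B) < \infty$, hence $\rank E_d(\mathbb{Q}) \ge 1$, hence $\corank_{\mathbb{Z}_p}\Sel_{p^\infty}(E_d) \ge 1$ by \eqref{eq:ineqcorank}. If in addition $w(E_d) = +1$, then $\rank_{\mathrm{an}} E_d(\mathbb{Q})$ is even, so by the $p$-Parity Theorem \cite{MR2680426} the nonnegative integer $\corank_{\mathbb{Z}_p}\Sel_{p^\infty}(E_d)$ is even as well, hence at least $2$. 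Therefore
\[
\sum_{d \in \mathcal{D}_{A,B}(\alpha;X)} \corank_{\mathbb{Z}_p}\Sel_{p^\infty}(E_d) \;\ge\; \#\mathcal{D}_{A,B}(\alpha;X) + \#\bigl\{ d \in \mathcal{D}_{A,B}(\alpha;X) : w(E_d) = +1 \bigr\},
\]
and the corollary follows once one proves
\begin{equation} \label{eq:rootgoal}
\liminf_{X\to\infty}\; \frac{\#\bigl\{ d \in \mathcal{D}_{A,B}(\alpha;X) : w(E_d) = +1 \bigr\}}{\#\mathcal{D}_{A,B}(\alpha;X)} \;>\; 0 ,
\end{equation}
which is precisely the input established in the course of proving Theorem~\ref{thm:main}.

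By the standard description of the root number in a quadratic twist family (essentially due to Rohrlich), $w(E_d)$ depends only on the class of $d$ modulo a fixed modulus $Q = Q(E)$: with $N$ the conductor of $E$ one has $w(E_d) = w(E)\,\bigl(\tfrac{-N}{d}\bigr)$ whenever $\gcd(d,2N) = 1$ (the Jacobi symbol $d \mapsto \bigl(\tfrac{-N}{d}\bigr)$ being itself periodic, of period dividing $8N$), with explicit corrections in the finitely many remaining coprimality types. Since $-N$ is not a perfect square, the character $\bigl(\tfrac{-N}{\cdot}\bigr)$ is nontrivial, so there exist residue classes $a$ modulo $Q$, coprime to $Q$, on which $w(E_d) = +1$. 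Hence \eqref{eq:rootgoal} will follow once we show that for every such class one has
\[
\#\{\, d \in \mathcal{D}_{A,B}(\alpha;X) : d \equiv a \pmod{Q} \,\} \;\gg\; \#\mathcal{D}_{A,B}(\alpha;X);
\]
equivalently, that the character sum $\sum_{d \in \mathcal{D}_{A,B}(\alpha;X)} \bigl(\tfrac{-N}{d}\bigr)$ is $o\bigl(\#\mathcal{D}_{A,B}(\alpha;X)\bigr)$.

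To obtain this we use the parametrization of $\mathcal{D}_{A,B}(\alpha;X)$ underlying \cite[Theorem~1]{2020arXiv200402500P}: a nontorsion point of $E_d(\mathbb{Q})$ has the form $P = (u/w^2, v/w^3)$ with $u,w,v$ integers, $w \ge 1$, $\gcd(u,w) = \gcd(v,w) = 1$ and $dv^2 = F(u,w)$, where $F(u,w) := u^3 + Auw^4 + Bw^6$; and because $j(E_d) = j(E)$ is fixed, so that on the model $Y^2 = X^3 + Ad^2X + Bd^3$ (of discriminant $d^6(4A^3+27B^2)$ up to a constant factor) the difference between the canonical height and the naive height is controlled explicitly in terms of $\log d$, the condition $\hat{h}_{E_d}(P) \le (1/8 + \alpha)\log d$ translates into the statement that $d$ is the squarefree part of $F(u,w)$ for some coprime pair $(u,w)$ in an explicit region $\mathcal{R}_\alpha(X)$, whose lattice point count is $\asymp X^{\kappa}$ with $\kappa = \kappa(\alpha) > 0$. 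Imposing $d \equiv a \pmod{Q}$ and detecting squarefreeness by Möbius inversion --- writing $d = F(u,w)/m^2$ and summing over $m$ --- reduces the count of $\{\, d \in \mathcal{D}_{A,B}(\alpha;X) : d \equiv a \pmod{Q} \,\}$ to the evaluation, uniformly in $m$ and in $a$, of the number of $(u,w) \in \mathcal{R}_\alpha(X)$ satisfying congruence conditions depending on $m$, $a$ and $Q$; this is exactly the analysis carried out in \cite{2020arXiv200402500P} for the untwisted count, run with one extra congruence. The corresponding main term is positive for every primitive class $a$ --- here the hypothesis that $x^3 + Ax + B$ is irreducible, hence separable and without square factors, is what prevents $F(u,w)$ and its square part from being confined to special residue classes --- which gives the desired lower bound and hence \eqref{eq:rootgoal} (with more care the proportion should in fact tend to $1/2$, yielding limiting averages $\ge 3/2$).

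The sole genuine obstacle is this last equidistribution statement: controlling the error term uniformly over the region $\mathcal{R}_\alpha(X)$ --- which is cut out by the almost-minimal-height inequality and is not a box --- while simultaneously performing the squarefree sieve on $F(u,w)$ and imposing the congruence $d \equiv a \pmod{Q}$. Structurally it is a $Q$-twisted refinement of the main counting theorem of \cite{2020arXiv200402500P}, and the restriction $\alpha \in (0,1/120)$ is inherited from it; the only new point is that the extra congruence condition on $d$ does not conspire with the square part of $F(u,w)$, which is ensured by the irreducibility hypothesis on $x^3+Ax+B$. Granting this, the formal deduction of the first paragraph from \eqref{eq:ineqcorank} and the $p$-Parity Theorem completes the proof.
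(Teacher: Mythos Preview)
Your first paragraph is correct and is exactly the paper's argument: for $d\in\mathcal{D}_{A,B}(\alpha;X)$ one has $\corank_{\mathbb{Z}_p}\Sel_{p^\infty}(E_d)\ge\rank E_d(\mathbb{Q})\ge 1$, and if $\omega(E_d)=+1$ then the analytic rank is even, hence by $p$-parity the corank is even and thus at least $2$; so the corollary follows from $\Omega_{+1}(\alpha;X)\gg 1$, which is exactly inequality~\eqref{eq:Omeganu>0} established during the proof of Theorem~\ref{thm:main}. You could have stopped there.

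Where you diverge from the paper is in paragraphs~2--4, your sketch of \emph{how} $\Omega_{+1}>0$ is obtained. The paper does \emph{not} prove equidistribution of $\mathcal{D}_{A,B}(\alpha;X)$ in residue classes modulo $4N_E$, nor does it show the character sum is $o(\#\mathcal{D}_{A,B})$. Instead it passes to the explicit subfamily $\mathcal{Q}(\alpha;X)\subset\mathcal{D}_{A,B}(\alpha;X)$ parametrised by $d=Q(u,v,w)=vF(u,vw^2)$ squarefree (not your $dv^2=u^3+Auw^4+Bw^6$), and applies Cauchy--Schwarz:
\[
\Omega_\nu(\alpha;X)\ \ge\ \frac{S_{Q,\nu}(\alpha;X)^2}{\#\mathcal{D}_{A,B}(\alpha;X)\cdot R_Q^{(2)}(\alpha;X)}.
\]
The second moment $R_Q^{(2)}$ is bounded above by $X^{1/2}\log X$ using Salberger's refinement of the determinant method (Proposition~\ref{prop:RQ2}; this is where $\alpha<1/120$ enters), and the congruence-restricted first moment $S_{Q,\nu}\ge S_Q(a;\alpha;X)\gg X^{1/2}\log X$ is obtained via a squarefree sieve on $Q(u,v,w)$ over the region $\mathcal{R}_Q(\alpha)$ (Proposition~\ref{prop:SQa}). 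Your proposed route---an arithmetic-progression version of the asymptotic for $\#\mathcal{D}_{A,B}$ from \cite{2020arXiv200402500P}---is not implausible, but that asymptotic itself already rests on the same second-moment input to control multiplicities, so the Cauchy--Schwarz step is not really being bypassed. Also, your claim that the positive-proportion lower bound and the $o(\#\mathcal{D}_{A,B})$ character-sum estimate are ``equivalent'' is not right: the latter is strictly stronger, and the paper only proves (and only needs) the former.
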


In addition, we also clearly obtain the following conditional result about the average algebraic rank.

\begin{corollary}
	Assume the Parity Conjecture. Under the assumptions of Theorem~\ref{thm:main}, one has
	\[ \liminf_{X\to\infty} \frac{1}{\#\mathcal{D}_{A,B}(\alpha;X)} \sum_{d\in\mathcal{D}_{A,B}(\alpha;X)} \rank(E_d(\mathbb{Q})) > 1. \]
\end{corollary}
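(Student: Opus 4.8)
The plan is to exploit the fact that every twist in the family already has analytic rank at least~$1$, and to upgrade this to rank at least~$2$ for a positive proportion of them, which forces the average above~$1$. First I would record that every $d\in\mathcal{D}_{A,B}(\alpha;X)$ satisfies $\rank_{\mathrm{an}}E_d(\mathbb{Q})\ge 1$: since $\eta_d(A,B)<\infty$ forces $\rank E_d(\mathbb{Q})\ge 1$, Kolyvagin's theorem (valid since $E_d$ is modular) excludes $\rank_{\mathrm{an}}E_d(\mathbb{Q})=0$. Hence
\[ \sum_{d\in\mathcal{D}_{A,B}(\alpha;X)}\rank_{\mathrm{an}}E_d(\mathbb{Q}) \ \ge\ \#\mathcal{D}_{A,B}(\alpha;X)+\#\bigl\{d\in\mathcal{D}_{A,B}(\alpha;X):\rank_{\mathrm{an}}E_d(\mathbb{Q})\ge 2\bigr\}, \]
so it suffices to bound the last cardinality below by $\delta\,\#\mathcal{D}_{A,B}(\alpha;X)$ for a fixed $\delta>0$ and all large $X$.

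I would then pass to root numbers. Writing $w(E_d)\in\{\pm1\}$ for the global root number, the functional equation of $L(E_d,s)$ gives $w(E_d)=(-1)^{\rank_{\mathrm{an}}E_d(\mathbb{Q})}$, so any $d$ with $w(E_d)=+1$ has even — hence, by the first paragraph, at least~$2$ — analytic rank. Thus it is enough to show $w(E_d)=+1$ for a positive proportion of $d\in\mathcal{D}_{A,B}(\alpha;X)$. What makes this tractable is that the global root number of a quadratic twist is a congruence condition: the standard computation of local root numbers yields a fixed modulus $Q=Q(A,B)$ (a bounded power of the conductor of $E$ times a power of $2$) and a fixed $\{\pm1\}$-valued function $\Phi$ on residues modulo $Q$ with $w(E_d)=\Phi(d\bmod Q)$ for all square-free $d\ge 1$, the factors coming from the primes $p\mid d$ of additive reduction of $E_d$ collapsing, via quadratic reciprocity, into such a congruence. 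One checks that $\Phi$ is non-degenerate on the residues relevant to the family, the irreducibility of $x^{3}+Ax+B$ serving to discard exceptional curves.

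It remains to show that $d\bmod Q$ is sufficiently equidistributed as $d$ ranges over $\mathcal{D}_{A,B}(\alpha;X)$. Here I would use the parametrisation underlying \cite[Theorem~1]{2020arXiv200402500P}: a rational point $(\mu/e^{2},\nu/e^{3})$ of $E_d$ with $\gcd(\mu,e)=1$ corresponds to an integer solution of $d\nu^{2}=\mu^{3}+A\mu e^{4}+Be^{6}$, and (together with the finitely many analogous families indexed by the reduction type of the point at primes dividing~$d$) the bound $\eta_d(A,B)\le d^{1/8+\alpha}$ with $\alpha<1/120$ confines the parameters to explicit regions $\mathcal{R}(X)$ on which $d$ is, up to a controlled square factor and the square-free sieve, the value of a polynomial of shape $G(\mu,e)=\mu^{3}+A\mu e^{4}+Be^{6}$; irreducibility of $x^{3}+Ax+B$ makes $G$ irreducible in $\mathbb{Z}[\mu,e]$, which is precisely what powers the square-free sieve of \cite{2020arXiv200402500P}. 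Expanding $\Phi$ into Dirichlet characters modulo $Q$, separating the square part of $G(\mu,e)$ by that same sieve, and stripping the $\nu$-dependence by multiplicativity of the Jacobi symbol, one is left with bounding sums $\sum_{(\mu,e)}\psi\bigl(G(\mu,e)\bigr)$ over sub-boxes of $\mathcal{R}(X)$ with congruence and coprimality conditions, $\psi$ a non-principal character modulo $Q$. Since $G$ is irreducible, $\psi\circ G$ is genuinely non-constant, so such sums admit a power saving — e.g. by opening $\psi$ and applying the Weil bound to the resulting complete one-variable exponential sums — and are $o(\#\mathcal{R}(X))$. This yields $\#\{d\in\mathcal{D}_{A,B}(\alpha;X):w(E_d)=+1\}=(\tfrac12+o(1))\,\#\mathcal{D}_{A,B}(\alpha;X)$, so the theorem holds with, say, $\delta=\tfrac14$.

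The crux is this last step: one must run the root-number character sum \emph{in tandem} with the square-free sieve and with the (fairly delicate) parametrisation of \cite{2020arXiv200402500P}, keeping every error term uniform, and check that the regions $\mathcal{R}(X)$ are regular enough — in particular that neither parameter ranges over too short an interval — for the cancellation in the character sums to override the main term. The reduction steps that precede it are immediate or routine.
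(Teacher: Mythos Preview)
Two points. First, you never use the Parity Conjecture: your displayed inequality and everything that follows concern $\rank_{\mathrm{an}} E_d(\mathbb{Q})$, so what you are sketching is a proof of Theorem~\ref{thm:main}, not of the corollary. In the paper the corollary is one line: for $d \in \mathcal{D}_{A,B}(\alpha;X)$ one has $\rank E_d(\mathbb{Q}) \geq 1$ by construction, and the proof of Theorem~\ref{thm:main} already establishes that a positive proportion of such $d$ have $\omega(E_d) = +1$; under the Parity Conjecture these have even algebraic rank, hence rank at least~$2$, and the average exceeds~$1$.

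Second, your route to that positive-proportion statement is genuinely different from, and heavier than, the paper's. You aim for the asymptotic $\tfrac12 + o(1)$ by proving equidistribution of $d$ modulo a fixed $Q$ over the full family $\mathcal{D}_{A,B}(\alpha;X)$, via character sums threaded through the parametrisation and sieve of~\cite{2020arXiv200402500P}. The paper sidesteps this entirely: it passes to the explicit subfamily $\mathcal{Q}(\alpha;X) \subset \mathcal{D}_{A,B}(\alpha;X)$ parametrised by triples $(u,v,w)$ with $Q(u,v,w) = d$, uses Cauchy--Schwarz to reduce the lower bound for $\Omega_\nu$ to a second-moment upper bound $R_Q^{(2)} \ll X^{1/2}\log X$ and a first-moment lower bound $S_{Q,\nu} \gg X^{1/2}\log X$, and obtains the latter by running the square-free sieve with a single prescribed residue class modulo $4N_E$ (Proposition~\ref{prop:SQa}) --- no character sums, no equidistribution, and no need to control $\mathcal{D}_{A,B}(\alpha;X)$ itself in residue classes. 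Your plan would in effect require refining the asymptotic for $\#\mathcal{D}_{A,B}(\alpha;X)$ from~\cite{2020arXiv200402500P} to individual residue classes modulo $4N_E$; that is plausible, but it is substantially more than what is needed or proved here, and your sketch of the ``crux'' (uniform power-saving for $\sum \psi(G)$ over the relevant regions, in tandem with the sieve) is not yet an argument.
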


\subsection*{Organisation of the paper}
The proof of Theorem~\ref{thm:main} follows the strategy of Le~Boudec~\cite{MR3788863} and is the subject of Section~\ref{sec:thmain}, where it is realised assuming the validity of Propositions~\ref{prop:RQ2} and~\ref{prop:SQnu}.
The purpose of Section~\ref{sec:SME} is to establish Proposition~\ref{prop:RQ2}, using a result of Salberger~\cite{MR2369057} based on the determinant method and which improves on the celebrated work of Heath-Brown~\cite{MR1906595}.
In Section~\ref{sec:SFS}, we establish Proposition~5 by showing that a certain polynomial in three variables takes the expected amount of square-free values as its variables run over some region.

\subsection*{Acknowledgements}
The research of the author is integrally funded by the Swiss National Science Foundation through the SNSF Professorship number 170565 awarded to Pierre Le~Boudec for the project \emph{Height of rational points on algebraic varieties}.

The author would like to thank the referee for their extremely careful work and very pertinent suggestions.

%%%%%%%%%%%%%%%%%%%%%%%%%%%%%%%%%%%%%%%%%%%%%%%%%%%%%%%%%%%%%%%%%
\section{The proof of Theorem~\ref{thm:main}} \label{sec:thmain}
%%%%%%%%%%%%%%%%%%%%%%%%%%%%%%%%%%%%%%%%%%%%%%%%%%%%%%%%%%%%%%%%%

Throughout this paper, we view $A$, $B$ and $\alpha$ as fixed and every implicit constant may depend on all of them. 
Any other dependence is specified.

We begin by recalling the definitions of various height functions. Let $h : \mathbb{P}^1(\mathbb{Q}) \to \mathbb{R}_{\geq0}$ denote the logarithmic Weil height and let $h_x : \mathbb{P}^2(\mathbb{Q}) \to \mathbb{R}_{\geq0}$ be defined by
\[ h_x(x:y:z) = h(x:z) \]
if $(x:y:z) \neq (0:1:0)$, and $h_x(0:1:0) = 0$.
The canonical height $\hat{h}_{E_d}$ on $E_d$ is defined for $P \in E_d(\mathbb{Q})$ by
\[ \hat{h}_{E_d}(P) = \frac12 \lim_{n\to\infty} \frac{1}{4^n} h_x(2^nP). \]
One can show (see for instance~\cite[Lemma~3]{MR3455753}) the existence of two constants $c_1$ and $c_2$ depending only on $A$ and $B$ such that for every $P \in E_d(\mathbb{Q})$, one has
\begin{equation} \label{eq:heightineq}
	c_1 < \hat{h}_{E_d}(P) - \frac12 h_x(P) < c_2.
\end{equation}
Write
\begin{equation} \label{eq:def:F}
	F(x,z) = x^3 + Axz^2 + Bz^3,
\end{equation}
and recall that, under the assumptions of Theorem~\ref{thm:main}, this polynomial is irreducible over the integers.
We introduce the polynomial
\begin{equation} \label{eq:def:Q}
	Q(u,v,w) = v F(u,vw^2).
\end{equation}
For $(u,v,w) \in \mathbb{Z}_{\geq1}^3$ satisfying $Q(u,v,w) \geq 1$ and $\mu(Q(u,v,w)) \neq 0$, the point $P$ with coordinates $(uvw : 1 : v^2w^3)$ lies in $E_{Q(u,v,w)}(\mathbb{Q})$. The square-free assumption on $Q(u,v,w)$ ensures that $\gcd(u,vw) = 1$, which implies $h_x(P) = \log\max\{u,vw^2\}$. Thus, if $\max\{u,vw^2\} \geq \ee^{-2c_1}$, the first inequality in \eqref{eq:heightineq} implies $\hat{h}_{E_d}(P) \neq 0$, so $P$ is not a torsion point. In this case, we have
\[ \log \eta_d(A,B) \leq \hat{h}_{E_d}(P) < \frac12 h_x(P) + c_2, \]
which leads to
\[ \eta_d(A,B) < \ee^{c_2} \max\{u,vw^2\}^{1/2}. \]
We introduce the region
\begin{equation} \label{eq:def:RQ}
	\mathcal{R}_Q(\alpha) = \{ (u,v,w) \in \mathbb{R}_{\geq1}^3 : \ee^{-2c_1} \leq \max\{u, vw^2\} \leq \ee^{-2c_2} Q(u,v,w)^{1/4+2\alpha} \},
\end{equation}
and define the number of representations
\begin{equation} \label{eq:def:rQ}
	r_Q(\alpha;d) = \#\{ (u,v,w) \in \mathbb{Z}^3 \cap \mathcal{R}_Q(\alpha) : Q(u,v,w) = d \}.
\end{equation}
By the above discussion, if $d$ is square-free, the condition $r_Q(\alpha;d) \geq 1$ ensures $\eta_d(A,B) \leq d^{1/8+\alpha}$. In other words, the set
\begin{equation} \label{eq:def:Qfamily}
	\mathcal{Q}(\alpha;X) = \{ d \in \mathcal{S}(X) : r_Q(\alpha;d) \geq 1 \}
\end{equation}
is a subset of $\mathcal{D}_{A,B}(\alpha;X)$.

Let $\omega(E_d)$ denote the sign of the functional equation of $L(E_d,s)$. As mentioned in Section~\ref{sec:intro}, the analytic rank of $E_d$ is at least one for $d \in \mathcal{D}_{A,B}(\alpha;X)$, so that $\rankan(E_d)$ is at least 2 and even if and only if $\omega(E_d) = 1$.
For $\nu \in \{-1,1\}$, we define
\[ \Omega_\nu(\alpha;X) = \frac{\#\{ d \in \mathcal{D}_{A,B}(\alpha;X) : \omega(E_d) = \nu \}}{\#\mathcal{D}_{A,B}(\alpha;X)}. \]
To prove Theorem~\ref{thm:main}, we establish that for $\alpha \in (0,1/120)$ and $\nu \in \{-1,1\}$, one has
\begin{equation} \label{eq:Omeganu>0}
	\liminf_{X\to\infty} \Omega_\nu(\alpha;X) > 0.
\end{equation}

Letting $d$ range over the set $\mathcal{Q}(\alpha;X)$ defined in \eqref{eq:def:Qfamily} instead of $\mathcal{D}_{A,B}(\alpha;X)$ provides the lower bound
\begin{equation} \label{eq:OmeganuQ}
	\Omega_\nu(\alpha;X) \geq \frac{\#\{ d \in \mathcal{Q}(\alpha;X) : \omega(E_d) = \nu \}}{\#\mathcal{D}_{A,B}(\alpha;X)}.
\end{equation}

We introduce the second moment
\[ R_Q^{(2)}(\alpha;X) = \sum_{d \leq X} r_Q(\alpha;d)^2, \]
as well as the modified first moment
\[ S_{Q,\nu}(\alpha;X) = \sum_{\substack{d \in \mathcal{S}(X) \\ \omega(E_d) = \nu}} r_Q(\alpha;d). \]
The Cauchy--Schwarz inequality, combined with the lower bound \eqref{eq:OmeganuQ}, yields
\begin{equation} \label{eq:OmeganuLB}
	\Omega_\nu(\alpha;X) \geq \frac{S_{Q,\nu}(\alpha;X)^2}{\#\mathcal{D}_{A,B}(\alpha;X) R_Q^{(2)}(\alpha;X)}.
\end{equation}

The cardinality of the set $\mathcal{D}_{A,B}(\alpha;X)$ is the subject of a recent article by the author, where it is shown~\cite[Theorem~1]{2020arXiv200402500P} that for $\alpha < 1/120$, there exists a positive constant $c(\alpha)$ such that one has
\begin{equation} \label{eq:ART01}
	\#\mathcal{D}_{A,B}(\alpha;X) \sim c(\alpha) X^{1/2} \log X.
\end{equation}

In Section~\ref{sec:SME}, we establish the following upper bound for the second moment.
\begin{proposition} \label{prop:RQ2}
	Let $\alpha \in (0,1/120)$. One has
	\[ R_Q^{(2)}(\alpha;X) \ll X^{1/2} \log X. \]
\end{proposition}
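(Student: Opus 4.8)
The plan is to bound $R_Q^{(2)}(\alpha;X) = \sum_{d\le X} r_Q(\alpha;d)^2$ by counting quadruples. We have
\[
R_Q^{(2)}(\alpha;X) = \#\{ (\mathbf{u},\mathbf{u}') \in (\mathbb{Z}^3\cap\mathcal{R}_Q(\alpha))^2 : Q(\mathbf{u}) = Q(\mathbf{u}') \le X \},
\]
where $\mathbf{u}=(u,v,w)$. The diagonal contribution, where $\mathbf{u}=\mathbf{u}'$, is simply $\sum_{d\le X} r_Q(\alpha;d)$, which counts $(u,v,w)\in\mathbb{Z}^3\cap\mathcal{R}_Q(\alpha)$ with $vF(u,vw^2)\le X$. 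On $\mathcal{R}_Q(\alpha)$ one has $\max\{u,vw^2\}\le \ee^{-2c_2}Q(u,v,w)^{1/4+2\alpha}$, so the point $(u,v,w)$ is quite constrained; indeed combining $vw^2\le X^{1/4+2\alpha}$ and $u\le X^{1/4+2\alpha}$ with $Q\le X$ gives roughly $v\ll X^{1/2}$, $w\ll X^{1/8}$, $u\ll X^{1/4}$ (up to the $\alpha$-perturbation), so the diagonal is $\ll X^{1/2+\varepsilon}$; a slightly more careful count of the level sets $Q=d$ (each of size $O(d^\varepsilon)$, since $F$ irreducible means the cubic $F(u,vw^2)=d/v$ has boundedly many solutions for fixed $v,w$, and $d=vw^2\cdot(\text{stuff})$ forces few $(v,w)$) yields the desired $X^{1/2}\log X$. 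So the diagonal is already of the right order of magnitude, and the whole point is to show the off-diagonal terms do not dominate.

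For the off-diagonal, I would follow exactly the route advertised in the introduction: apply Salberger's refinement of Heath-Brown's determinant method to the affine surface
\[
S : Q(u,v,w) = Q(u',v',w')
\]
inside $\mathbb{A}^6$, or rather to its projectivisation. The equation $vF(u,vw^2)=v'F(u',v'w'^2)$ defines a geometrically irreducible (one must check this; irreducibility of $F$ over $\mathbb{Z}$ should give it, since the two sides involve disjoint sets of variables and neither is a constant) hypersurface of degree $6$ in a suitable weighted or multi-graded setting. The variables range over a box whose shape is dictated by $\mathcal{R}_Q(\alpha)$ and by $Q\le X$: as noted above, $v,v'\ll X^{1/2}$, $u,u'\ll X^{1/4+2\alpha}$, $w,w'\ll X^{1/8+\alpha}$. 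Feeding such an anisotropic box into the dimension-growth/determinant bounds of Salberger~\cite{MR2369057} (which for a geometrically irreducible variety of dimension $n$ and degree $D$ in $\mathbb{P}^N$ gives $O_{D,N,\varepsilon}(B^{n-1+1/\sqrt{D}+\varepsilon})$ points of height $\le B$, with boxes handled by the usual affine-to-projective passage), one obtains a bound of the shape $X^{1/2}(\log X)^{O(1)}$, possibly after slicing off degenerate loci. The constraint $\alpha<1/120$ is precisely what is needed to keep every exponent arising from the determinant method below the target $1/2$; this is where the numerology must be done carefully.

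The main obstacle I anticipate is twofold. First, the variety $S$ is not a nice smooth projective hypersurface: it lives most naturally in a weighted projective space because of the asymmetry between the $u$-degree and the $w$-degree in $Q(u,v,w)=vF(u,vw^2)$, and one must either homogenise appropriately or stratify. The correct way to extract a gain is to fix the "thin" variables — here $v,w,v',w'$ — and count $(u,u')$ on the resulting plane curve $vF(u,vw^2)=v'F(u',v'w'^2)$; this curve has bounded degree and, crucially, is absolutely irreducible for all but $O(X^\varepsilon)$ choices of the thin variables (the exceptional choices being those for which the cubic in $u$ shares a root pattern, controlled again by irreducibility of $F$). On the irreducible curves one invokes the Bombieri–Pila / Heath-Brown–Salberger bound $O(B^{1/D+\varepsilon})$ to count integral points in a box of side $B\ll X^{1/4+2\alpha}$; on the $O(X^\varepsilon)$ reducible fibres one counts trivially. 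Summing the contributions over the $O(X^{1/2+\varepsilon})$ choices of $(v,w,v',w')$ — here one uses $vw^2,v'w'^2\le X^{1/4+2\alpha}$ and $v,v'\le X^{1/2}$ to bound the number of thin tuples — then gives the total. The second obstacle, and the more delicate one, is getting rid of the $\varepsilon$'s and the logarithmic factors so that the final bound is genuinely $\ll X^{1/2}\log X$ rather than $X^{1/2+\varepsilon}$: Salberger's version is sharp enough to replace $B^\varepsilon$ by a power of $\log B$, and the summation over thin tuples can be organised (splitting according to the size of $v$ and of $w$, using $\sum_{v\le V} d(v)\ll V\log V$-type bounds) so that only a bounded power of $\log X$ is lost, but reconciling this with the target single power of $\log X$ requires care — in particular one must exploit that the main term of the right order already comes from the diagonal, so a bound $\ll X^{1/2}(\log X)^{C}$ for the off-diagonal would suffice if $C$ turns out larger than $1$, and I would first aim for that before optimising.
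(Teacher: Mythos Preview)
Your proposal has a genuine gap in the off-diagonal treatment. You suggest fixing the ``thin'' variables $(v,w,v',w')$ and applying a Bombieri--Pila/Heath-Brown curve bound to the resulting cubic $vF(u,vw^2)=v'F(u',v'w'^2)$ in $(u,u')$. But this does not close. The number of tuples $(v,w,v',w')$ with $vw^2,v'w'^2\ll X^{1/4+2\alpha}$ is $\asymp X^{1/2+4\alpha}$, and for each such tuple the curve bound gives $O(B^{1/3+\epsilon})$ with $B\ll X^{1/4+2\alpha}$, hence $O(X^{1/12+2\alpha/3+\epsilon})$. The total is $\gg X^{7/12}$, which exceeds $X^{1/2}$ regardless of how small $\alpha$ is. The slicing is wrong: $v$ is \emph{not} a thin variable --- it ranges up to $X^{1/4+2\alpha}$ --- and fixing it throws away too much.

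The paper's argument hinges on a sharper size estimate you are missing: from the shape of $\mathcal{R}_Q(\alpha)$ one deduces $w\ll X^{4\alpha}$ (Lemma~\ref{lem:wX4alpha}), not $X^{1/8+\alpha}$. Only $w$ is genuinely thin. One therefore fixes $(w_1,w_2)$ --- at most $X^{8\alpha}$ choices --- and is left with the quartic surface $V_{\mathbf{w}}:Q(u_1,v_1,w_1)=Q(u_2,v_2,w_2)$ in $\mathbb{P}^3$ (variables $u_1,u_2,v_1,v_2$, height $\ll X^{1/4+2\alpha}$). Salberger's bound for points \emph{off the lines} of such a surface gives $O(B^{13/8+\epsilon})$, and the arithmetic $13/32+45\alpha/4<1/2$ is exactly where $\alpha<1/120$ enters. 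The paper's ``diagonal'' is accordingly not your literal diagonal $\mathbf{u}=\mathbf{u}'$ but the projective condition $(u_1:v_1w_1^2)=(u_2:v_2w_2^2)$, which is shown (via a cited result) to account for \emph{all} rational points on lines of $V_{\mathbf{w}}$; bounding this enlarged diagonal by $X^{1/2}\log X$ requires a separate parametrisation argument (Lemma~\ref{lem:UB:MQ}).
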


In Section~\ref{sec:SFS}, we prove the following lower bound for the modified first moment.
\begin{proposition} \label{prop:SQnu}
	Let $\alpha \in (0,1/56)$ and $\nu \in \{-1,1\}$. One has
	\[ S_{Q,\nu}(\alpha;X) \gg X^{1/2} \log X. \]
\end{proposition}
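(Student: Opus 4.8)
The plan is to produce, for each $\nu \in \{-1,1\}$, a large supply of square-free values $d = Q(u,v,w)$ with prescribed root number $\omega(E_d) = \nu$ and with $(u,v,w)$ lying in the region $\mathcal{R}_Q(\alpha)$. Since $S_{Q,\nu}(\alpha;X) = \sum_{(u,v,w)} \mathbf{1}[Q(u,v,w) \le X]\,\mathbf{1}[Q(u,v,w)\text{ sq-free}]\,\mathbf{1}[\omega(E_{Q(u,v,w)}) = \nu]$ where the sum is over $(u,v,w) \in \mathbb{Z}^3 \cap \mathcal{R}_Q(\alpha)$, it suffices to exhibit a sub-box of $\mathcal{R}_Q(\alpha)$ on which (i) $Q \le X$, (ii) $Q$ takes square-free values a positive proportion of the time, and (iii) the root number can be pinned to the desired sign on a positive proportion of those. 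First I would choose the box so that the "$vw^2$ is the larger coordinate" branch dominates: take $v \asymp V$, $w$ fixed to a small constant (say $w=1$, or a bounded value used to adjust local conditions), and $u \le vw^2 = v$. Then $Q(u,v,w) = v\,F(u,v)$ with $F$ the binary cubic from \eqref{eq:def:F}, so $Q \asymp v \cdot v^3 = v^4$, and the constraints in \eqref{eq:def:RQ} become $\ee^{-2c_1} \le v \le \ee^{-2c_2} Q^{1/4 + 2\alpha}$, i.e. roughly $v \lesssim v^{1 + 8\alpha}$, which holds for all large $v$; imposing $Q \le X$ forces $V \asymp X^{1/4}$, and the number of admissible $(u,v)$ pairs is then $\asymp \sum_{v \le V} v = \asymp V^2 = \asymp X^{1/2}$. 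To recover the extra $\log X$ one runs $w$ over a dyadic-type range rather than fixing it: with $w \le W$ and $u \le vw^2$, $v \le V$ one gets $\asymp \sum_{w \le W}\sum_{v \le V} v w^2$ pairs while $Q \asymp v^4 w^6 \le X$ couples $V$ and $W$; balancing so that $\log$-many dyadic scales of $w$ each contribute $\asymp X^{1/2}$ gives the claimed $X^{1/2}\log X$. (This mirrors the count \eqref{eq:ART01} and the shape of $R_Q^{(2)}$ in Proposition~\ref{prop:RQ2}, so the target exponent is exactly what one expects.)

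The square-free count is where I would invoke Section~\ref{sec:SFS}: one needs that $Q(u,v,w) = vF(u,vw^2)$ is square-free for a positive proportion of $(u,v,w)$ in the box, with a uniform lower bound constant. Because $F$ is an irreducible cubic (hence separable), $Q$ as a polynomial in three variables has no repeated irreducible factor and is not divisible by the square of any prime, so the standard heuristic predicts a positive density $\prod_p (1 - \rho(p^2)/p^{?})$ of square-free values; the work of Section~\ref{sec:SFS} is precisely to make this rigorous in the relevant range, presumably by the square-free sieve (splitting the square divisor into a bounded part handled by inclusion–exclusion and a large part controlled by a determinant/geometry-of-numbers bound, exactly the kind of input needed for a degree-$6$-ish polynomial in three variables over a box with very unequal side lengths). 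I would also need to discard $(u,v,w)$ with $\gcd(u,vw) > 1$ — but on square-free values of $Q$ this is automatic, as already noted in the text.

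For the root number one uses that $\omega(E_d) = \omega(E) \cdot \chi_d(\text{stuff})$ decomposes as a product of local root numbers, and for square-free $d$ coprime to $2N_E$ the dependence on $d$ is through an explicit real character evaluated at the conductor of $E$ together with a controlled power of $-1$ and the sign of $d$; concretely $\omega(E_d)$ depends only on $d$ modulo a fixed modulus $M = M(A,B)$ (a power of $2$ times the bad primes) and on the residues of $d$ in those fixed arithmetic progressions. The strategy is then the usual one: restrict $(u,v,w)$ to fixed residue classes modulo $M$ (which, since $w$ can be chosen in a bounded set and $v, u$ range over progressions, costs only a constant factor in the count), on which $Q(u,v,w)$ lies in a single class mod $M$ and hence $\omega(E_{Q(u,v,w)})$ is constant and equal to $\nu$; one must check that for each $\nu \in \{-1,1\}$ at least one such admissible class mod $M$ exists, which follows because as $d$ ranges over square-free residues coprime to $M$ the character takes both values (the quadratic character $\chi_d(N_E)$ is nontrivial in $d$ unless $N_E$ is a perfect square, and the "irreducible cubic" hypothesis keeps $N_E$ from being degenerate — the sign-of-$d$ factor is harmless here since $d \ge 1$, so one instead toggles $\nu$ using a residue class that flips $\chi_d$ at some prime dividing $N_E$, or at $2$). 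Combining: the square-free count restricted to this class is still $\gg X^{1/2}\log X$, and every such $d$ contributes $1$ to $S_{Q,\nu}(\alpha;X)$.

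I expect the main obstacle to be the square-free sieve of Section~\ref{sec:SFS}: getting a power-saving error term for $\#\{(u,v,w) \in \text{box} : p^2 \mid Q(u,v,w)\}$ that is uniform over primes $p$ up to a suitable threshold, in a box whose three side lengths ($\asymp X^{1/4}$ for $v$, smaller for $u$ and $w$) are wildly unbalanced, is delicate — one cannot simply reduce to the one-variable square-free sieve, and controlling the large-$p$ contribution likely requires a determinant-method or lattice-point bound adapted to this geometry (the same circle of ideas used for Proposition~\ref{prop:RQ2}). By contrast, the root-number bookkeeping and the main-term lattice-point count are routine: the former is a finite check modulo $M(A,B)$, and the latter is elementary summation over the box, so the whole proposition reduces to the square-free statement proved in Section~\ref{sec:SFS}.
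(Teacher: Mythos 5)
Your strategy is the same as the paper's: fix the root number by noting that $\omega(E_d)=\chi_d(-N_E)\,\omega(E)$ depends only on $d \bmod 4N_E$, restrict $(u,v,w)$ to residue classes modulo $4N_E$ so that $Q(u,v,w)$ lands in a prescribed invertible class, and then show that $Q$ takes $\gg X^{1/2}\log X$ square-free values on a sub-region of $\mathcal{R}_Q(\alpha)$, with the $\log X$ coming from summing $X^{1/2}/w$ over $w$ up to a small power of $X$. Two comments. First, your choice of sub-box is the branch $u\le vw^2$, and there $F(u,vw^2)=u^3+Au(vw^2)^2+B(vw^2)^3$ need not be positive: $F(u,vw^2)>0$ forces $u/(vw^2)$ to exceed the largest real root $\theta$ of $F(t,1)$, and if $\theta>1$ (e.g.\ $F(x,z)=x^3-10z^3$) your entire sub-box gives $Q\le 0$ and contributes nothing. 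The paper avoids this by taking the opposite branch $u\ge c_3vw^2$ with $c_3$ large, so that $F(u,vw^2)\asymp u^3$ uniformly; your count goes through verbatim after this correction (the region for fixed $w$ is still a triangle of area $\asymp X^{1/2}/w$), so this is a repairable slip rather than a wrong idea, but as stated the main term can vanish.

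Second, the part you defer to Section~\ref{sec:SFS} is indeed the bulk of the work, but it is less deep than you anticipate: because square-freeness of $Q$ forces $\gcd(u,vw)=1$, the condition $\ell^2\mid F(u,vw^2)$ reduces to $u\equiv avw^2 \bmod \ell^2$ for one of $\rho(\ell^2)\ll_\epsilon \ell^\epsilon$ residues $a$, so the large-$\ell$ tail is controlled by an elementary coprime lattice-point bound of Heath-Brown (giving $\ll (X^{1/2}/(w\ell^2)+1)\rho(\ell^2)$), with no determinant method needed; the determinant method enters only in the second-moment bound of Proposition~\ref{prop:RQ2}. The remaining bookkeeping you omit (choosing $(u_0,v_0,w_0)$ with $Q(u_0,v_0,w_0)$ invertible mod $4N_E$ so that the congruence classes are compatible with the sieve, and verifying that both signs $\nu=\pm1$ are attained, which holds since $-N_E<0$ is not a square) matches the paper's argument.
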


Putting together the inequality \eqref{eq:OmeganuLB}, the estimate \eqref{eq:ART01} and Propositions~\ref{prop:RQ2} and~\ref{prop:SQnu}, we obtain \eqref{eq:Omeganu>0}, which completes the proof of Theorem~\ref{thm:main}.

%%%%%%%%%%%%%%%%%%%%%%%%%%%%%%%%%%%%%%%%%%%%%%%%%%%%%%%%%%%%%%%%%
\section{The second moment estimate} \label{sec:SME}
%%%%%%%%%%%%%%%%%%%%%%%%%%%%%%%%%%%%%%%%%%%%%%%%%%%%%%%%%%%%%%%%%

In this section, we establish Proposition~\ref{prop:RQ2}.
We begin with an easy preliminary lemma concerning the range of the triples in $\mathcal{R}_Q(\alpha)$.

\begin{lemma} \label{lem:wX4alpha}
	Let $\alpha \in (0,1/24)$. For $(u,v,w) \in \mathbb{Z}^3 \cap \mathcal{R}_Q(\alpha)$ satisfying $Q(u,v,w) \leq X$, one has $w \ll X^{4\alpha}$.
\end{lemma}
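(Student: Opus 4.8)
The plan is to combine the trivial upper bound for the binary cubic form $F$ with the two inequalities packaged in the definition \eqref{eq:def:RQ} of $\mathcal{R}_Q(\alpha)$; the outcome is a lower bound for $v$ which, divided into an upper bound for $vw^2$, isolates $w$. Note that no arithmetic information about $F$ (such as its irreducibility) is needed here, only its homogeneity and degree.

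First I would record the elementary estimate. Since $u,v,w \geq 1$ and $F(x,z) = x^3 + Axz^2 + Bz^3$, one has $|F(u,vw^2)| \leq (1+|A|+|B|)\max\{u,vw^2\}^3$, hence, writing $Q = Q(u,v,w) = vF(u,vw^2)$ and observing that $Q$ is a positive integer (it must be positive for $Q^{1/4+2\alpha}$ to appear in \eqref{eq:def:RQ}, and it is an integer, so $Q \geq 1$),
\[ Q \ll v\max\{u,vw^2\}^3. \]
Next, the right-hand inequality in \eqref{eq:def:RQ} gives $\max\{u,vw^2\} \leq \ee^{-2c_2}Q^{1/4+2\alpha}$, so that $\max\{u,vw^2\}^3 \ll Q^{3/4+6\alpha}$. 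Inserting this into the previous display yields $Q \ll vQ^{3/4+6\alpha}$, that is,
\[ v \gg Q^{1/4-6\alpha}. \]
It is precisely at this point that the hypothesis $\alpha < 1/24$ is used: it guarantees that the exponent $1/4-6\alpha$ is positive, so that this is a genuine lower bound for $v$.

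Finally, since $v \geq 1$ we have $w^2 \leq vw^2 \leq \max\{u,vw^2\} \leq \ee^{-2c_2}Q^{1/4+2\alpha}$, and combining this with the lower bound on $v$ gives
\[ w^2 = \frac{vw^2}{v} \ll \frac{Q^{1/4+2\alpha}}{Q^{1/4-6\alpha}} = Q^{8\alpha} \leq X^{8\alpha}, \]
where the last step uses $Q(u,v,w) \leq X$. Taking square roots yields $w \ll X^{4\alpha}$, as desired. I do not anticipate any real obstacle: the only point requiring care is checking that all the quantities involved are positive so that the manipulations with the exponent $1/4+2\alpha$ are legitimate, which is immediate from $\mathcal{R}_Q(\alpha) \subseteq \mathbb{R}_{\geq1}^3$ together with the positivity of $Q(u,v,w)$ noted above.
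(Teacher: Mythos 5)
Your proof is correct and follows essentially the same route as the paper's: both arguments combine the trivial bound $Q(u,v,w) \ll v\max\{u,vw^2\}^3$ with the inequality $\max\{u,vw^2\} \ll Q(u,v,w)^{1/4+2\alpha}$ from \eqref{eq:def:RQ} to eliminate $u$ and $v$. Your bookkeeping (extracting $v \gg Q^{1/4-6\alpha}$ and dividing it into $vw^2 \ll Q^{1/4+2\alpha}$) is marginally cleaner than the paper's, which first derives $w \ll v^{16\alpha/(1-24\alpha)}$ and then invokes $vw^2 \ll X^{1/4+2\alpha}$; the only small inaccuracy is your remark about where $\alpha<1/24$ is used, since the chain of inequalities you write is valid regardless of the sign of $1/4-6\alpha$, but this is harmless.
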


\begin{proof}
	From the definition of $\mathcal{R}_Q(\alpha)$ in~\eqref{eq:def:RQ}, one has
	\[ \max\{u,vw^2\} \ll Q(u,v,w)^{1/4+2\alpha} \ll (v\max\{u,vw^2\}^3)^{1/4+2\alpha}, \]
	so that
	$\max\{u,vw^2\}^{1-24\alpha} \ll v^{1+8\alpha}$. This implies $w \ll v^{16\alpha/(1-24\alpha)}$, and it suffices to note that one has $vw^2 \ll X^{1/4+2\alpha}$ to complete the proof.
\end{proof}

We now turn to the computation of $R_Q^{(2)}(\alpha;X)$. Consider the diagonal term
\[ m_Q(\alpha;d) = \#\left\{ ((u_1,v_1,w_1), (u_2,v_2,w_2)) \in \mathbb{Z}^3 \times \mathbb{Z}^3 : \pbox{\textwidth}{
	$(u_1,v_1,w_1), (u_2,v_2,w_2) \in \mathcal{R}_Q(\alpha)$ \\
	$Q(u_1,v_1,w_1) = Q(u_2,v_2,w_2) = d$ \\
	$(u_1:v_1w_1^2) = (u_2:v_2w_2^2)$ } \right\}, \]
where the extra condition is an equality in $\mathbb{P}^1(\mathbb{Q})$, and let
\begin{equation} \label{eq:aQ=rQ2-mQ}
	a_Q(\alpha;d) = r_Q(\alpha;d)^2 - m_Q(\alpha;d).
\end{equation}
Let
\begin{align} \label{eq:def:MQ+AQ}
	M_Q(\alpha;X) = \sum_{d \leq X} m_Q(\alpha;d), && A_Q(\alpha;X) = \sum_{d \leq X} a_Q(\alpha;d).
\end{align}
We now provide upper bounds for these two quantities, in the form of two lemmas.

\begin{lemma} \label{lem:UB:MQ}
	Let $\alpha \in (0,1/24)$. One has
	\[ M_Q(\alpha;X) \ll X^{1/2} \log X. \]
\end{lemma}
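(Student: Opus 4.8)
The quantity $M_Q(\alpha;X)$ counts pairs of triples in $\mathcal{R}_Q(\alpha)$ that produce the same point in $\mathbb{P}^1(\mathbb{Q})$ and the same value $d = Q(u,v,w) \le X$; summing over $d$, it simply counts such pairs with $Q \le X$. The plan is to fix the common projective point $(u_1 : v_1 w_1^2) = (u_2 : v_2 w_2^2)$ and show that, given this point, each triple is essentially determined, so that $M_Q(\alpha;X)$ is bounded by the number of admissible projective points, which in turn is controlled by a standard lattice-point / divisor argument of size $X^{1/2}\log X$.

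\textbf{Step 1 (Reduce to counting the projective points).} Since $d$ is forced to be square-free (as $Q(u,v,w) = d$ and the definition of $\mathcal{R}_Q$ only contributes when $Q \ge 1$; in fact $r_Q$ is supported on square-free $d$), the square-free condition gives $\gcd(u_i, v_i w_i) = 1$ for $i = 1,2$. Hence the fraction $u_i/(v_i w_i^2)$ is in lowest terms up to the extra factor $w_i$ in the denominator: write $(u_i : v_i w_i^2) = (p : q)$ with $\gcd(p,q)=1$, $p,q \ge 1$. Then $u_i = p t_i$ and $v_i w_i^2 = q t_i$ for some $t_i \ge 1$, and $\gcd(u_i, v_i w_i) = 1$ forces $t_i = 1$ when $\gcd(p, w_i) = 1$; more carefully, from $\gcd(u_i,v_iw_i)=1$ one gets $\gcd(p t_i, (v_i w_i)) = 1$, and since $w_i^2 \mid q t_i$ one deduces $w_i^2 \mid t_i$ unless... — the clean way is: $\gcd(u_i, v_i w_i) = 1$ together with $u_i (v_i w_i^2) $ having the shape $pq t_i^2 w_i^{-?}$ shows directly that $w_i^2 \mid q$ and $u_i = p$, $v_i = q/w_i^2$. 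So once $(p:q) \in \mathbb{P}^1$ is fixed, $u_i = p$ is determined, and $(v_i, w_i)$ is determined by the factorisation $q = v_i w_i^2$; the number of such factorisations of $q$ is $\ll_\varepsilon q^\varepsilon$, or more crudely $\ll d(q)$. Therefore $M_Q(\alpha;X) \ll \sum_{(p:q)} d(q)^2$, where the sum is over coprime pairs $(p,q)$ arising from some triple in $\mathcal{R}_Q(\alpha)$ with $Q \le X$.

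\textbf{Step 2 (Range of the points and conclusion).} By the definition of $\mathcal{R}_Q(\alpha)$ in~\eqref{eq:def:RQ}, $\max\{u, vw^2\} = \max\{p,q\} \ll Q(u,v,w)^{1/4 + 2\alpha} \le X^{1/4+2\alpha}$, and also $Q(u,v,w) = v F(u, vw^2) \asymp v \max\{p,q\}^3$, so $v \gg Q / X^{3/4+6\alpha} $... — what we actually need is an upper bound linking $d$ to $p,q$. Since $d = Q(u,v,w) = v F(u, vw^2)$ with $u = p$ and $vw^2 = q$, we have $v = q/w^2$ and $d = (q/w^2) F(p, q) \asymp (q/w^2) \max\{p,q\}^3$. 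Combined with $\max\{p,q\} \gg 1$ (the lower bound $\ee^{-2c_1} \le \max\{u,vw^2\}$ in~\eqref{eq:def:RQ}) and $w \ll X^{4\alpha}$ from Lemma~\ref{lem:wX4alpha}, one gets $q \ll d \cdot w^2 / \max\{p,q\}^3 \ll X^{1+8\alpha}/\max\{p,q\}^3$; together with $q \le \max\{p,q\} \ll X^{1/4+2\alpha}$ this shows the pairs $(p,q)$ range over a region of area $\ll X^{1/2}$ (heuristically $p \ll X^{1/4+2\alpha}$, $q \ll X^{1/4+2\alpha}$, but constrained by $pq^{?}\ll X$ after accounting for the factor $v$, giving the $X^{1/2}$). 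Feeding this into $\sum d(q)^2 \ll (\text{area}) \cdot \log X$ via the standard estimate $\sum_{q \le Y} d(q)^2 \ll Y (\log Y)^3$ — actually one only needs the weaker $\sum_{q\le Y} d(q) \ll Y\log Y$ after first summing trivially over $p$ — yields $M_Q(\alpha;X) \ll X^{1/2}\log X$, as desired. Since $\alpha < 1/24$, all the exponents $8\alpha$, etc., are absorbed into the implied constant through the logarithm and do not spoil the $X^{1/2}$ main term.

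\textbf{Main obstacle.} The delicate point is Step~1: disentangling, from the single projective equality $(u_1:v_1w_1^2) = (u_2:v_2w_2^2)$ and the square-freeness of $d$, exactly how much freedom remains in each triple $(u_i,v_i,w_i)$. One must be careful that $w_i$ appears squared in the denominator, so the naive ``a point in $\mathbb{P}^1$ determines the triple'' is false; the correct statement is that it determines the triple up to the divisor-bounded choice of writing the (reduced) denominator $q$ as $v_i w_i^2$, and one must check the coprimality $\gcd(u_i, v_i w_i) = 1$ is genuinely forced by $\mu(d) \ne 0$. The rest is a routine count of lattice points weighted by a divisor function over a region whose area is $\ll X^{1/2}$, for which the gain of a single $\log X$ (rather than a power of $\log$) comes from summing the divisor function only in the variable $q$ after the trivial sum over $p$.
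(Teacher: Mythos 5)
Your proposal has two genuine gaps. The first is the premise of Step~1: you claim that $d$ is forced to be square-free because ``$r_Q$ is supported on square-free $d$''. This is false. The quantity $M_Q(\alpha;X)=\sum_{d\le X}m_Q(\alpha;d)$ (like $R_Q^{(2)}$) sums over \emph{all} $d\le X$; no square-free condition appears in the definition of $r_Q(\alpha;d)$ or $m_Q(\alpha;d)$, and $Q(u,v,w)=vF(u,vw^2)$ is certainly not always square-free. Without square-freeness you cannot conclude $\gcd(u_i,v_iw_i)=1$, so writing $(u_i:v_iw_i^2)=(p:q)$ in lowest terms only gives $u_i=pt_i$, $v_iw_i^2=qt_i$ for some unknown scaling $t_i\ge1$, and your entire reduction ``the projective point determines $u_i$ and determines $(v_i,w_i)$ up to a divisor-bounded choice'' collapses: the extra parameters $t_1,t_2$ must be tracked (the equality $Q(u_1,v_1,w_1)=Q(u_2,v_2,w_2)$ forces $t_1^2/w_1=t_2^2/w_2$, which is exactly the relation the paper exploits). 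The paper handles this by factoring out $\gcd(u_1,u_2)$ and deducing from the coprimality of the cofactors a clean five-parameter parametrisation $((ux,vy^3,wx^2),(uy,vx^3,wy^2))$ of the pairs, valid for every $d\ge1$.

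The second gap is Step~2, which is not a proof but a heuristic. The bound on the number of admissible $(p,q)$ is asserted (``heuristically\ldots giving the $X^{1/2}$'') rather than derived; the region is not a box, and near the real root rays of $F$ the constraint $vF(p,q)\le X$ does not cut down $p$ the way a naive $\max\{p,q\}^3$ estimate suggests, so a genuine lattice-point argument (the paper uses Davenport's lemma after an explicit change of variables) is needed. Worse, your divisor-weight accounting does not deliver the stated bound: each pair $(p,q)$ carries weight equal to the \emph{square} of the number of representations $q=vw^2$, and summing ``trivially over $p$ first'' does not remove that square; bounding it by $d(q)^2$ and using $\sum_{q\le Y}d(q)^2\ll Y(\log Y)^3$ yields $X^{1/2}(\log X)^3$. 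Since $\#\mathcal{D}_{A,B}(\alpha;X)\asymp X^{1/2}\log X$ and the Cauchy--Schwarz step \eqref{eq:OmeganuLB} requires $R_Q^{(2)}(\alpha;X)\ll X^{1/2}\log X$ on the nose, any extra power of $\log X$ destroys the positive-proportion conclusion \eqref{eq:Omeganu>0}. (A sharper weight, namely $\#\{w:w^2\mid q\}$, does have bounded mean square, but you neither identify it nor use it.) Both gaps need to be repaired before this can be considered a proof.
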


\begin{proof}
	For a fixed value of $d \geq 1$, let $((u_1,v_1,w_1),(u_2,v_2,w_2))$ be a pair counted in $m_Q(\alpha;d)$.
	Write $u = \gcd(u_1,u_2)$, $u_1 = ux$ and $u_2 = uy$, where $x$ and $y$ are coprime positive integers.
	The projective condition defining $m_Q(\alpha;d)$ dehomogenises to
	\begin{align*}
		x u_2 = y u_1, && x v_2w_2^2 = y v_1w_1^2.
	\end{align*}
	From these expressions, we obtain
	\[ Q(u_2,v_2,w_2) = \frac{y^3 v_2}{x^3 v_1} Q(u_1,v_1,w_1), \]
	and since $Q(u_1,v_1,w_1) = d \geq 1$, the equation $Q(u_1,v_1,w_1) = Q(u_2,v_2,w_2)$ in the definition of $m_Q(\alpha;d)$ becomes $x^3 v_1 = y^3 v_2$, or equivalently $x^2 w_2 = y^2 w_1$.
	From the coprimality of $x$ and $y$, it follows that $x^2$ divides $w_1$ and that $x^3$ divides $v_2$.
	Writing $w_1 = w x^2$ and $v_2 = v x^3$ for some positive integers $v$ and $w$, we obtain $w_2 = w y^2$ and $v_1 = v y^3$ and therefore, we have
	\[ ((u_1,v_1,w_1),(u_2,v_2,w_2)) = ((ux, vy^3, wx^2), (uy, vx^3, wy^2)), \]
	for some 5-tuple $(u, v, w, x, y)$ of positive integers. This brings about the new expression
	\[ m_Q(\alpha;d) \leq \#\left\{ (u,v,w,x,y) \in \mathbb{Z}_{\geq1}^5 : \pbox{\textwidth}{
		$(ux, vy^3, wx^2), (uy, vx^3, wy^2) \in \mathcal{R}_Q(\alpha)$ \\
		$Q(ux, vy^3, wx^2) = d$ } \right\}, \]
	and Lemma~\ref{lem:wX4alpha} implies $wx^2, wy^2 \ll X^{4\alpha}$.
	Summing over $d$, we reach the upper bound
	\begin{equation} \label{eq:UB:MQ}
		M_Q(\alpha;X) \ll \mathop{\sum\sum\sum}_{wx^2, wy^2 \ll X^{4\alpha}}  \#\left\{ (u,v) \in \mathbb{Z}_{\geq1}^2 : \pbox{\textwidth}{
			$1 \leq Q(ux, vy^3, wx^2) \leq X$ \\
			$ux, uy, vw^2x^3y^4, vw^2x^4y^3 \ll X^{1/4+2\alpha}$ } \right\}.
	\end{equation}

	An elementary application of the classical lattice-point counting method of Davenport~\cite{MR43821} shows
	\[ \#\left\{ (u,v) \in \mathbb{Z}_{\geq1}^2 : \pbox{\textwidth}{
		$1 \leq Q(ux, vy^3, wx^2) \leq X$ \\
		$ux, uy, vx, vy \ll X^{1/4+2\alpha}$ } \right\}
		\ll \iint_{0 < Q(ux,vy^3,wx^2) \leq X} \dif u \dif v + \frac{X^{1/4+2\alpha}}{\max\{x,y\}}. \]
	The change of variables $(u,v) \mapsto (uvX^{1/4}w^{1/2} (xy)^3, vX^{1/4}w^{-3/2})$ yields
	\[ \iint_{0 < Q(ux,vy^3,wx^2) \leq X} \dif u \dif v = \frac{X^{1/2} (xy)^3}{w} \iint_{0 < (xy)^{12} v^4 F(u,1) \leq 1} v \dif u \dif v. \]
	The integral over $v$ has size $2^{-1}F(u,1)^{-1/2}(xy)^{-6}$ and, since $F(u,1)$ has three distinct roots, integrating over $u$ gives a constant times $(xy)^{-6}$.
	Summing over $x$, $y$ and $w$ in \eqref{eq:UB:MQ}, we obtain 
	\begin{align*}
		M_Q(\alpha;X) & \ll \mathop{\sum\sum\sum}_{wx^2, wy^2 \ll X^{4\alpha}} \left\{ \frac{X^{1/2}}{w(xy)^3} + \frac{X^{1/4+2\alpha}}{\max\{x,y\}} \right\} \\
			& \ll X^{1/2} \log X + X^{1/4+6\alpha},
	\end{align*}
	which concludes the proof since $\alpha < 1/24$.
\end{proof}

\begin{lemma} \label{lem:UB:AQ}
	Let $\alpha \in (0,1/120)$. One has
	\[ A_Q(\alpha;X) \ll X^{1/2}. \]
\end{lemma}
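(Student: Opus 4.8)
The plan is to estimate $A_Q(\alpha;X) = \sum_{d\le X} a_Q(\alpha;d)$ by interpreting $a_Q(\alpha;d) = r_Q(\alpha;d)^2 - m_Q(\alpha;d)$ as counting ordered pairs of triples $((u_1,v_1,w_1),(u_2,v_2,w_2)) \in (\mathbb{Z}^3\cap\mathcal{R}_Q(\alpha))^2$ with $Q(u_1,v_1,w_1) = Q(u_2,v_2,w_2) = d$ but with \emph{distinct} images $(u_1:v_1w_1^2) \ne (u_2:v_2w_2^2)$ in $\mathbb{P}^1(\mathbb{Q})$. Summing over $d\le X$, such a pair corresponds to two distinct rational points on the fixed plane curve (or rather, two points mapping to distinct points under the $x$-coordinate), both lying on the twist $E_d$ for a common $d$, with $d$ square-free bounded by $X$; equivalently, we are counting integral points of bounded height on the affine surface cut out by $Q(u_1,v_1,w_1) = Q(u_2,v_2,w_2)$ inside $\mathbb{A}^6$, away from the locus where the two $x$-coordinates agree. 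The idea is to first fix the small variable $w = \gcd$-type parameter (here both $w_1, w_2 \ll X^{4\alpha}$ by Lemma~\ref{lem:wX4alpha}), and for each choice of the auxiliary variables, bound the number of remaining solutions.

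The key step is to apply Salberger's refinement of the determinant method (the result of~\cite{MR2369057} improving Heath-Brown~\cite{MR1906595}) to the relevant affine or projective variety. Concretely, after clearing the small variables and homogenising appropriately, the equation $Q(u_1,v_1,w_1) = Q(u_2,v_2,w_2)$ together with the height constraints from $\mathcal{R}_Q(\alpha)$ — which by the analysis in Lemma~\ref{lem:wX4alpha} and~\eqref{eq:UB:MQ} force $u_i, v_iw_i^2 \ll X^{1/4+2\alpha}$ — defines a family of curves or surfaces of controlled degree, on which the determinant method gives a power-saving count. The irreducibility of $F$ (hence of $Q$ in a suitable sense) is what guarantees the geometric hypotheses needed: the relevant curve $F(u_1, t) = F(u_2, t)$ in the $x$-line variable, or the associated surface, does not degenerate into lines, so the number of integral points of height $\le B$ on it is $O_\varepsilon(B^{2/d + \varepsilon})$ or better, with $d$ the degree. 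One then sums these contributions over the $O(X^{4\alpha})$-sized ranges of the auxiliary variables and over $d \le X$; the constraint $\alpha < 1/120$ is precisely what is needed to absorb the $X^{4\alpha}$ and $X^{8\alpha}$-type factors and still land below $X^{1/2}$ with room to spare (no logarithm), in contrast to the $X^{1/2}\log X$ bound of Lemma~\ref{lem:UB:MQ}.

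The main obstacle I expect is the bookkeeping required to reduce the six-variable counting problem to a shape where Salberger's theorem applies with a genuinely sub-$X^{1/2}$ exponent: one must check that the variety is irreducible of the right degree (using the hypothesis that $x^3+Ax+B$ is irreducible), identify and remove the trivial/diagonal components — which is exactly what subtracting $m_Q(\alpha;d)$ accomplishes — and verify that the boundary terms and the lower-dimensional strata (e.g.\ where one of $u_i$, $v_i$, $w_i$ vanishes or where the curve in the fibred family becomes reducible) contribute at most $O(X^{1/2})$. A secondary technical point is handling the uniformity of the determinant-method estimate over the family of fibres as the auxiliary parameters vary, so that the implied constant stays under control after the final summation; one typically invokes the uniform version of Salberger's bound for projective varieties, where the implied constant depends only on the degree and dimension and not on the specific fibre. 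Once these geometric inputs are in place, the summation over $d$, $w$, $x$, $y$ is routine and yields the claimed bound $A_Q(\alpha;X) \ll X^{1/2}$.
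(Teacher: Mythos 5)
Your overall strategy is the one the paper follows: fix $w_1, w_2 \ll X^{4\alpha}$ via Lemma~\ref{lem:wX4alpha}, view the remaining count as rational points of height $\ll X^{1/4+2\alpha}$ on the degree-four surface $V_{\mathbf{w}} \subset \mathbb{P}^3$ cut out by $Q(u_1,v_1,w_1) = Q(u_2,v_2,w_2)$, remove the locus responsible for the diagonal term, and apply Salberger. However, two points in your plan are substantively off, and they happen to be exactly the two load-bearing steps. First, the exponent: you invoke a bound of the shape $O_\varepsilon(B^{2/d+\varepsilon})$, but that is Heath-Brown's bound for \emph{irreducible plane curves} of degree $d$; for a surface of degree $4$ in $\mathbb{P}^3$ the relevant statement (Salberger, Theorem~0.1 of~\cite{MR2369057}) gives $O_\epsilon(B^{13/8+\epsilon})$ for the points \emph{not lying on any line of the surface}. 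With $B = X^{1/4+2\alpha}/k$ and after summing over $k$, $w_1$, $w_2$ this produces $X^{13/32+45\alpha/4+\epsilon}$, and the inequality $13/32 + 45\alpha/4 < 1/2$ is precisely where the threshold $\alpha < 1/120$ comes from. Your proposal asserts that $\alpha<1/120$ "is what is needed" without carrying out this balance, and with the exponent $2/d = 1/2$ you quote the numerology would be meaningless (and that bound is false for surfaces: a surface of degree $4$ has $\gg B^2$ rational points of height $\le B$ in general, e.g.\ via a conic fibration or simply because it contains lines).

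Second, the treatment of the lines. You suggest that irreducibility of $F$ ensures the surface "does not degenerate into lines", but $V_{\mathbf{w}}$ \emph{does} contain lines (the diagonal-type locus), and points on lines can accumulate $\asymp B^2$ rational points, far exceeding $B^{13/8}$. What the argument actually requires is the nontrivial geometric input that \emph{every} rational point of $V_{\mathbf{w}}$ lying on a line of $V_{\mathbf{w}}$ satisfies $(u_1:v_1w_1^2) = (u_2:v_2w_2^2)$, so that the condition defining $a_Q(\alpha;d)$ (equivalently, having subtracted $m_Q(\alpha;d)$) removes all of $L_{\mathbf{w}}(\mathbb{Q})$ and Salberger's theorem becomes applicable to what remains. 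This is~\cite[Corollary~4.6]{2020arXiv200402500P}, and it is here — not in any non-degeneration of the surface — that the hypothesis that $x^3+Ax+B$ is irreducible (equivalently, that the twists have trivial $2$-torsion) enters. A smaller omission: to pass from the affine count of quadruples $(u_1,u_2,v_1,v_2)$ to a projective point count one must sum over $k=\gcd(u_1,u_2,v_1,v_2)$, which is harmless since $13/8>1$, but it should be said. Without the correct surface exponent and the correct statement about the lines, the proof as outlined does not close.
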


\begin{proof}
	Using Lemma~\ref{lem:wX4alpha} and carrying out the summation over $d$, we find in particular that
	\[ A_Q(\alpha;X) \ll \mathop{\sum\sum}_{w_1,w_2 \ll X^{4\alpha}} \#\left\{ (u_1,u_2,v_1,v_2) \in \mathbb{Z}^4 : \pbox{\linewidth}{
		$1 \leq u_1, u_2, v_1, v_2 \ll X^{1/4+2\alpha}$ \\
		$Q(u_1,v_1,w_1) = Q(u_2,v_2,w_2) $ \\
		$(u_1:v_1w_1^2) \neq (u_2:v_2w_2^2)$ } \right\}. \]
	For fixed $w_1$ and $w_2$, write $\mathbf{w} = (w_1,w_2)$ and let $V_{\mathbf{w}}$ be the projective surface in $\mathbb{P}^3$ defined by the equation
	\[ Q(u_1,v_1,w_1) = Q(u_2,v_2,w_2). \]
	Let $H$ denote the usual exponential Weil height on $\mathbb{P}^3(\mathbb{Q})$ and set
	\[ a_{Q,\mathbf{w}}(k;\alpha;X) = \#\left\{ (u_3 : u_4 : v_3 : v_4) \in V_{\mathbf{w}}(\mathbb{Q}) : \pbox{\linewidth}{
		$H(u_3 : u_4 : v_3 : v_4) \ll X^{1/4+2\alpha}/k$ \\
		$(u_3:v_3w_1^2) \neq (u_4:v_4w_2^2)$ } \right\}. \]
	Summing over $k = \gcd(u_1,u_2,v_1,v_2)$, we find
	\[ A_Q(\alpha;X) \ll \mathop{\sum\sum}_{w_1,w_2 \ll X^{4\alpha}} \sum_{k \ll X^{1/4+2\alpha}} a_{Q,\mathbf{w}}(k;\alpha;X). \]

	Let $L_{\mathbf{w}}$ denote the Zariski closed subset of $V_{\mathbf{w}}$ defined as the union of the lines contained in $V_{\mathbf{w}}$.
	Since the polynomial $F(x,z)$ is assumed to be irreducible, meaning that the curve $E$ and all its twists $E_d$ have trivial 2-torsion, a recent result by the author~\cite[Corollary~4.6]{2020arXiv200402500P} states that if a point $(u_3:u_4:v_3:v_4) \in V_{\mathbf{w}}(\mathbb{Q})$ lies in $L_{\mathbf{w}}(\mathbb{Q})$, then $(u_3:v_3w_1^2) = (u_4:v_4w_2^2)$ in $\mathbb{P}^1(\mathbb{Q})$. Such points are excluded here and a result of Salberger~\cite[Theorem~0.1]{MR2369057} therefore shows that for any $\epsilon > 0$, we have
	\[ a_{Q,\mathbf{w}}(k;\alpha;X) \ll_\epsilon \left( \frac{X^{1/4+2\alpha}}{k} \right)^{13/8+\epsilon}. \]
	It follows that
	\[ A_Q(\alpha;X) \ll_\epsilon X^{13/32+45\alpha/4+\epsilon}, \]
	and the assumption $\alpha < 1/120$ concludes the proof.
\end{proof}

We finally note that combining the equality~\eqref{eq:aQ=rQ2-mQ} with the definitions in~\eqref{eq:def:MQ+AQ} and with Lemmas~\ref{lem:UB:MQ} and~\ref{lem:UB:AQ}, we immediately complete the proof of Proposition~\ref{prop:RQ2}.

%%%%%%%%%%%%%%%%%%%%%%%%%%%%%%%%%%%%%%%%%%%%%%%%%%%%%%%%%%%%%%%%%
\section{The square-free sieve} \label{sec:SFS}
%%%%%%%%%%%%%%%%%%%%%%%%%%%%%%%%%%%%%%%%%%%%%%%%%%%%%%%%%%%%%%%%%

This section is dedicated to establishing Proposition~\ref{prop:SQnu}.
Recall that $N_E$ denotes the conductor of the curve $E$. 

For $d \geq 1$ and square-free, let $D$ be the discriminant of the quadratic field $\mathbb{Q}(\sqrt{d})$ and let $\chi_d$ be the associated quadratic character. When $\gcd(D,N_E) = 1$, the sign of the functional equation of $L(E_d,s)$ is determined by (see~\cite[Section~4]{MR2322343})
\[ \omega(E_d) = \chi_d(-N_E) \omega(E). \]
Using this equality and of the fact that $-N_E$ is not a square, which implies that $\chi_d(-N_E)$ attains both signs as $d$ varies, it follows from quadratic reciprocity that there exists $d_0 \in (\mathbb{Z}/4N_E\mathbb{Z})^\times$ such that
\[ S_{Q,\nu}(\alpha;X) \geq \sum_{\substack{d \in \mathcal{S}(X) \\ d \equiv d_0 \bmod 4N_E}} r_Q(\alpha;d). \]

Replacing $r_Q(\alpha;d)$ by its definition in~\eqref{eq:def:rQ}, this inequality becomes
\[ S_{Q,\nu}(\alpha;X) \geq \#\left\{ (u,v,w) \in \mathbb{Z}^3 \cap \mathcal{R}_Q(\alpha) : \pbox{\textwidth}{
	$Q(u,v,w) \in \mathcal{S}(X)$ \\
	$Q(u,v,w) \equiv d_0 \bmod 4N_E$ } \right\}. \]
	
It is immediate from the definition of $Q(u,v,w)$ in~\eqref{eq:def:Q} that $Q(1,d_0,0) = d_0$, so defining
\begin{equation} \label{eq:def:NalphaX}
	N(\alpha;X) = \#\left\{ (u,v,w) \in \mathbb{Z}^3 \cap \mathcal{R}_Q(\alpha) : \pbox{\textwidth}{
		$Q(u,v,w) \in \mathcal{S}(X)$ \\
		$(u,v,w) \equiv (1,d_0,0) \bmod 4N_E$ } \right\},
\end{equation}
we obtain the inequality
\[ S_{Q,\nu}(\alpha;X) \geq N(\alpha;X). \]

The remainder of Section~\ref{sec:SFS} is dedicated to the proof of Propositon~\ref{prop:NalphaX} below, which implies Proposition~\ref{prop:SQnu}.

\begin{proposition} \label{prop:NalphaX}
	Let $\alpha \in (0,1/56)$. One has
	\[ N(\alpha;X) \gg X^{1/2} \log X. \]
\end{proposition}

We point out that a square-free sieve argument has already been employed by Gouvêa and Mazur~\cite{MR1080648}, and at the same time but independently by Greaves~\cite{MR1150469}, to show that integral binary forms, under certain assumptions, represent the expected number of square-free integers.
This is similar to the statement of Proposition~\ref{prop:NalphaX}, the difference being that in our case the variables range over the region defined in~\eqref{eq:def:RQ} instead of over a box as they do in both aforementioned articles.
For this reason, the sieving process must be carried out in full here.

\subsection{Preliminary results} \label{ssec:SFS:prelim}

This section presents several results required later on.
Recall the definition~\eqref{eq:def:F} of the binary cubic form $F(x,z)$ and let $\Delta_F = -(4A^3+27B^2)$ denote the discriminant of the polynomial $F(x,1)$.
Define the arithmetic function
\begin{equation} \label{eq:def:rho}
	\rho(n) = \#\{ u \bmod n : F(u,1) \equiv 0 \bmod n \}.
\end{equation}
This function is multiplicative and satisfies the upper bound~\cite[Corollary~2]{MR1119199}
\begin{equation} \label{eq:rhoSte91}
	\rho(p^k) \leq 2 p^{v_p(\Delta_F)/2} + 1,
\end{equation}
for any prime $p$ and any $k \geq 1$.
Moreover, it follows from Hensel's lemma that for $p \nmid \Delta_F$ and $k \geq 1$, one has
\begin{equation} \label{eq:rhopkrhop}
	\rho(p^k) = \rho(p).
\end{equation}
Together, \eqref{eq:rhoSte91} and \eqref{eq:rhopkrhop} imply $\rho(n^k) \ll \rho(n)$. By way of an estimate for the summatory function of $\rho(n)$ (e.g.~\cite[Equation~4]{MR1691404}), this demonstrates the upper bound
\begin{equation} \label{eq:sumrho}
	\sum_{n \leq X} \rho(n^k) \ll X.
\end{equation}

Finally, we give an elementary lemma about a sum of a certain arithmetic function involving $\varphi^*(n) = \varphi(n)/n$.

\begin{lemma} \label{lem:SFS:MT:v}
	Let $a, q, m \in \mathbb{Z}_{\geq1}$ with $\gcd(a,q) = 1$. One has
	\[ \sum_{\substack{n \leq X \\ \gcd(n,m) = 1 \\ n \equiv a \bmod q}} \mu(n)^2 \varphi^*(n)
		= \frac{C_0(q) C_1(m;q)}{q} X + O(X^{1/2}), \]
	with
	\begin{align*}
		C_0(q) = \prod_{p \nmid q} \left( 1 - \frac{2}{p^2} + \frac{1}{p^3} \right), &&
		C_1(m;q) = \prod_{\substack{p \mid m \\ p \nmid q}} \left(1 + \frac{1}{p} - \frac{1}{p^2} \right)^{-1}.
	\end{align*}
\end{lemma}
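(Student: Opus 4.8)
The plan is to evaluate the sum $\sum_{n\le X}\mu(n)^2\varphi^*(n)$ subject to the congruence $n\equiv a\bmod q$ and coprimality condition $\gcd(n,m)=1$ via Dirichlet hyperbola / Dirichlet series techniques, exploiting multiplicativity. First I would write $\mu(n)^2\varphi^*(n)$ as a Dirichlet convolution: since $\varphi^*$ is multiplicative with $\varphi^*(p)=1-1/p$, one has $\mu(n)^2\varphi^*(n)=\sum_{n=ab^2}g(a)h(b)$ for suitable multiplicative $g,h$; more directly, I would look for a multiplicative $f$ with $\mu(n)^2\varphi^*(n)=\sum_{de=n}\mu(e)^2 f(d)$ or similar, chosen so that $f$ is supported on squarefull or small integers so that $\sum f(d)/d$ converges. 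Concretely, $\mu^2\varphi^* = \mathbf{1}\star f$ where $f$ is multiplicative, $f(p)=-1/p$, $f(p^2)=-(1-1/p)$, $f(p^k)=0$ for $k\ge 3$; then $\sum_{p}|f(p)|\cdot(\text{stuff})$ is not absolutely small, so instead I would use $\mu^2\varphi^* = \varphi^* \star g$ where $g$ kills the non-squarefree part, i.e. use the identity $\mu(n)^2 = \sum_{d^2\mid n}\mu(d)$, giving $\mu(n)^2\varphi^*(n)=\sum_{d^2\mid n}\mu(d)\varphi^*(n)$ and then peel off $\varphi^*(n)=\varphi^*(d^2)\varphi^*(n/d^2)$ when $\gcd$ conditions allow — but $\varphi^*$ is not quite that simple on $d^2\mid n$. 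The cleanest route: write $\varphi^*=\mathbf{1}\star\psi$ with $\psi$ multiplicative, $\psi(p)=-1/p$, $\psi(p^k)=0$ for $k\ge2$, so $\mu^2\varphi^*=\mu^2\cdot(\mathbf{1}\star\psi)$; this does not factor through the convolution nicely because of the $\mu^2$. I would therefore instead directly set up the generating Dirichlet series $\sum_n \mu(n)^2\varphi^*(n)n^{-s}=\prod_p(1+(1-p^{-1})p^{-s})$, compare with $\zeta(s)$, and extract the pole at $s=1$, tracking the congruence and coprimality conditions through the Euler product.

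The key steps, in order: (i) introduce the indicator of $n\equiv a\bmod q$ either via characters mod $q$ (giving a main term from the principal character and error terms from non-principal ones, handled by Pólya–Vinogradov-type bounds — but since we only claim $O(X^{1/2})$ this is cleaner via a direct elementary argument) or, preferably, by noting $\gcd(a,q)=1$ forces $\gcd(n,q)=1$ and splitting $n$ by residues; (ii) use $\mu(n)^2=\sum_{b^2\mid n}\mu(b)$ to remove the squarefree constraint, writing $n=b^2 c$ with $\mu(b)\ne 0$, $b,c$ constrained by the congruence (which splits into a congruence on $c$ modulo $q/\gcd(q,b^2)=q$ since $\gcd(b,q)=1$) and by $\gcd(b^2c,m)=1$; (iii) for the inner sum over $c\le X/b^2$ with $c$ in a fixed residue class mod $q$ and $\gcd(c,mb)=1$, evaluate $\sum_c\varphi^*(b^2c)$ — here I factor $\varphi^*(b^2c)=\varphi^*(b^2)\varphi^*(c)/\varphi^*(\gcd(b^2,c))$ is awkward, so instead I restrict to $\gcd(c,b)=1$ automatically from $\mu(n)^2\ne0$ meaning $\gcd(b^2,c)$ has no square, combined with... actually $\mu(b^2c)^2\ne0$ is false; so the correct move is $n$ squarefree $\iff b=1$, which trivializes (ii) — meaning I should NOT use that identity. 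Let me restate: (ii') write $\varphi^*=\mathbf{1}\star\psi$ with $\psi(p)=-1/p$, $\psi$ supported on squarefree integers, so $\mu(n)^2\varphi^*(n)=\mu(n)^2\sum_{de=n}\psi(d)=\sum_{de=n}\mu(d)^2\mu(e)^2\psi(d)$ using multiplicativity of $\mu^2$ and $\gcd(d,e)=1$ on squarefree $n$; then $\psi(d)\mu(d)^2=\prod_{p\mid d}(-1/p)=\mu(d)/d\cdot$ (something), in fact $\mu(d)^2\psi(d)=\mu(d)/d$ is false too since $\psi(p)=-1/p$ and $\mu(p)/p=-1/p$, so actually $\mu(d)^2\psi(d)=(-1/p_1)\cdots=(-1)^{\omega(d)}/d=\mu(d)/d$ when $d$ squarefree — yes! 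So $\mu(n)^2\varphi^*(n)=\sum_{de=n}\frac{\mu(d)}{d}\mu(e)^2$.

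Now the computation is routine: $S:=\sum_{\substack{de\le X,\ de\equiv a(q),\ \gcd(de,m)=1}}\frac{\mu(d)}{d}\mu(e)^2$. Swap to sum over $d$ first (with $d$ squarefree, $\gcd(d,mq)=1$), inner sum over $e\le X/d$ with $e\equiv a\bar d\bmod q$, $\gcd(e,m)=1$, $\mu(e)^2=1$. The inner count of squarefree $e$ in an arithmetic progression coprime to $m$ is $\frac{\kappa(m,q)}{q}\cdot\frac{X}{d}+O(\sqrt{X/d})$ for an explicit constant $\kappa$ coming from the standard squarefree-in-AP estimate (square sieve, $O(\sqrt{x})$ error). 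Summing $\frac{\mu(d)}{d}\cdot\frac{X}{d}$ over all $d$ coprime to $mq$ gives $X\cdot\frac{1}{q}\kappa(m,q)\cdot\prod_{p\nmid mq}(1-1/p^2)$ times corrections, and matching Euler factors with the stated $C_0(q)C_1(m;q)/q$ is the final bookkeeping; the tail $\sum_{d>?}\mu(d)/d^2$ converges absolutely so the truncation is harmless, and the accumulated error is $\sum_d \frac{1}{d}\sqrt{X/d}\ll\sqrt X$. The main obstacle is purely organisational: getting the Euler product constants to come out exactly as $C_0(q)=\prod_{p\nmid q}(1-2/p^2+1/p^3)$ and $C_1(m;q)=\prod_{p\mid m, p\nmid q}(1+1/p-1/p^2)^{-1}$, which requires care in how the local densities for ``squarefree and coprime to $m$ in a progression mod $q$'' combine with the $\prod(1-1/p^2)$ coming from the $\mu(d)/d^2$ sum — in particular checking that at primes $p\mid m$ the factor $(1-1/p^2)$ from the $d$-sum survives while the $e$-sum contributes $1$ (since $p\mid e$ is forbidden), and reconciling $(1-1/p^2)\cdot 1$ against the target $(1-2/p^2+1/p^3)\cdot(1+1/p-1/p^2)^{-1}$; one verifies $(1-1/p^2)=(1-2/p^2+1/p^3)(1+1/p-1/p^2)^{-1}\cdot(\ldots)$ — wait, $(1-2/p^2+1/p^3)=(1-1/p)(1-1/p^2)$ and $(1+1/p-1/p^2)^{-1}=(1-1/p)^{-1}(1-1/p^2)^{-1}\cdot$? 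No: $(1-1/p)(1+1/p-1/p^2)=1-1/p^2-1/p^2+1/p^3=1-2/p^2+1/p^3$, hence $C_0(q)C_1(m;q)=\prod_{p\nmid q}(1-2/p^2+1/p^3)\prod_{p\mid m,p\nmid q}(1+1/p-1/p^2)^{-1}=\prod_{p\nmid qm}(1-2/p^2+1/p^3)\prod_{p\mid m,p\nmid q}(1-1/p)$, which should match $\prod_{p\nmid mq}(1-1/p^2)\cdot(\text{local factors at }p\mid m)$ after including the $(1-1/p)=\varphi^*(p)$-type contributions — so the real work is verifying this local identity at each prime and confirming the claimed error term, which I expect to take a page but present no genuine difficulty.
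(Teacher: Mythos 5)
Your overall strategy---factor $\mu^2\varphi^*$ as a Dirichlet convolution with one small factor and one factor whose sums in arithmetic progressions are understood, then swap the order of summation---is sound, and it genuinely differs from the paper's route: the paper writes $f=(f*\mu)*\mathbf{1}$, so its inner count is just multiples of $d$ in a progression (error $O(1)$ per term), and the $O(X^{1/2})$ comes from $\sum_{d\le X}|(f*\mu)(d)|\ll X^{1/2}$ using that $f*\mu$ is supported on cube-free integers with $|(f*\mu)(d_1d_2^2)|\le 1/d_1$. Your version instead convolves against $\mu(e)^2$ and imports the $O(\sqrt{x})$ estimate for squarefree integers in progressions; that is a legitimate, slightly less self-contained alternative.

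However, the identity you settle on, $\mu(n)^2\varphi^*(n)=\sum_{de=n}\tfrac{\mu(d)}{d}\mu(e)^2$, is false. Take $n=p^2$: the left side is $0$, while the right side contains the term $(d,e)=(p,p)$ contributing $-1/p$. The step passing from $\mu(n)^2\sum_{de=n}\psi(d)$ to $\sum_{de=n}\mu(d)^2\mu(e)^2\psi(d)$ is valid only for squarefree $n$ (as you note in passing), but a convolution identity used inside $\sum_{n\le X}$ must hold for all $n$; as written you would be computing the mean value of the multiplicative function $g$ with $g(p)=1-1/p$ but $g(p^2)=-1/p\neq 0$, and the resulting local factor at $p\nmid mq$ would be $(1-p^{-2})^2=1-2/p^2+1/p^4$ rather than the required $1-2/p^2+1/p^3$. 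The correct identity carries a coprimality constraint, $\mu(n)^2\varphi^*(n)=\sum_{de=n,\ \gcd(d,e)=1}\tfrac{\mu(d)}{d}\mu(e)^2$; equivalently one can use the true convolution inverse $h=(\mu^2\varphi^*)*(\mu^2)^{-1}$, which has $h(p^k)=(-1)^k/p$ for \emph{all} $k\ge 1$, not $h(p^2)=0$. With the constraint $\gcd(e,d)=1$ restored, the inner squarefree count acquires a $d$-dependent local factor $\prod_{p\mid d}(1+1/p)^{-1}$ and an error $O_\epsilon(d^{\epsilon}\sqrt{X/d})$, both harmless, but they must be tracked through the Euler-product bookkeeping you defer to the end; without them the constant does not come out to $C_0(q)C_1(m;q)$. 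Your closing verification $(1-1/p)(1+1/p-1/p^2)=1-2/p^2+1/p^3$ is correct and is indeed the identity that reconciles the constants once the convolution is fixed.
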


\begin{proof}
	Write
	\[ f(n) = \begin{cases}
		\mu(n)^2 \varphi^*(n), & \text{if } \gcd(n,m) = 1, \\
		0, & \text{otherwise}, \end{cases} \]
	and denote the quantity of interest by
	\[ A(X) = \sum_{\substack{n \leq X \\ n \equiv a \bmod q}} f(n). \]
	This sum can be expressed as
	\begin{align*}
		A(X) 	& = \sum_{\substack{n \leq X \\ n \equiv a \bmod q}} \sum_{d \mid n} (f \ast \mu)(d) \\
			& = \frac{X}{q} \sum_{\substack{d \geq 1 \\ \gcd(d,q) = 1}} \frac{(f\ast\mu)(d)}{d}
				+ O\left( \sum_{d \leq X} |(f\ast\mu)(d)| \right)
				+ O\left( \frac{X}{q} \sum_{d > X} \frac{|(f\ast\mu)(d)|}{d} \right).
	\end{align*}
	Remark that for $k \geq 1$, one has
	\[ (f\ast\mu)(p^k) = \begin{cases}
		0, 		& p \nmid m \text{ and } k \geq 3 \text{ or } p \mid m \text{ and } k \geq 2, \\
		-1/p, 		& p \nmid m \text{ and } k = 1, \\
		-(1-1/p), 	& p \nmid m \text{ and } k = 2, \\
		-1, 		& p \mid m \text{ and } k =1, \end{cases} \]
	so the main term is as claimed.
	Remark that the computation of $(f\ast\mu)(p^k)$ shows, by multiplicativity, that one has
	\[ | (f\ast\mu)(d) | \leq | (f\ast\mu)(d') |, \]
	for all $d \geq 1$, where
	\[ d' = \prod_{p \nmid m} p^{v_p(d)}. \]
	One can therefore assume $m = 1$ when estimating the error terms, which we now do.

	Let $d \geq 1$ cube-free and write $d = d_1d_2^2$ with $d_1$ and $d_2$ square-free and coprime. One has
	\begin{align*}
		|(f\ast\mu)(d_1)| = 1/d_1, && |(f\ast\mu)(d_2^2)| \leq 1,
	\end{align*}
	and so, the sums appearing in the error terms are bounded as
	\[ \sum_{d \leq X} |(f\ast\mu)(d)| \leq \mathop{\sum\sum}_{d_1d_2^2 \leq X} \frac{1}{d_1} \ll X^{1/2}, \]
	and
	\[ \sum_{d > X} \frac{|(f\ast\mu)(d)|}{d} \leq \mathop{\sum\sum}_{d_1d_2^2 > X} \frac{1}{d_1^2d_2^2} \ll X^{-1/2}. \]
	This concludes the proof.
\end{proof}

\subsection{Setup for the square-free sieve} \label{ssec:SFS:setup}

In this section, we establish a lower bound for the quantity $N(\alpha;X)$ defined in~\eqref{eq:def:NalphaX} by transforming the region in which the triples $(u,v,w)$ range.

Impose the additional condition $u \geq c_3vw^2$ for some $c_3 \geq 1$, so the maximum in the definition of $\mathcal{R}_Q(\alpha)$ in~\eqref{eq:def:RQ} is simply $u$, and let $c_4 = 1 + |A| + |B|$. For $c_3$ large enough,
%meaning
%\[ 1- \frac{|A|}{c_3^2} - \frac{|B|}{c_3^3} \geq \frac12 \]
one has
\[ \frac12 u^3 \leq F(u,vw^2) \leq c_4 u^3. \]
These bounds allow us to strengthen the two conditions $Q(u,v,w) \leq X$ and $u \leq \ee^{-2c_2} Q(u,v,w)^{1/4+2\alpha}$ to 
\begin{align*}
	c_4u^3v \leq X, && u \leq \ee^{-2c_2}  \left(\frac{u^3v}{2} \right)^{1/4+2\alpha}.
\end{align*}
Letting $c_5 = \ee^{-8c_2} 2^{-(1+8\alpha)}$, the last condition can be rewritten as $u^{1-24\alpha} \leq c_5 v^{1+8\alpha}$ and we therefore obtain the inequality
\[ N(\alpha;X) \geq \#\left\{ (u,v,w) \in \mathbb{Z}_{\geq1}^3 : \pbox{\textwidth}{
	$c_3vw^2 \leq u$, $c_4u^3v \leq X$ \\
	$u^{1-24\alpha} \leq c_5 v^{1+8\alpha}$ \\
	$\mu(Q(u,v,w)) \neq 0$ \\
	$(u,v,w) \equiv (1,d_0,0) \bmod 4N_E$ } \right\}. \]
Note that for any triple counted in this set, one has
\[ c_5 u v^{1+8\alpha} X^{8\alpha} \geq c_3 u^{1-24\alpha} vw^2 (c_4u^3v)^{8\alpha}, \]
and that setting $c_6 = c_3^{-1/2} c_4^{-4\alpha} c_5^{1/2}$, this inequality simplifies to
\[ w \leq c_6 X^{4\alpha}. \]

Let 
\begin{equation} \label{eq:def:Uw}
	U_w = c_3^{1/4} c_4^{-1/4} X^{1/4} w^{1/2}.
\end{equation}
We reduce the range of $u$ by imposing the additional restriction $u \leq U_w$, making the condition $c_4u^3v \leq X$ redundant and thus giving
\[ N(\alpha;X) \geq \sum_{\substack{w \leq c_6 X^{4\alpha} \\ w \equiv 0 \bmod 4N_E}} \#\left\{ (u,v) \in \mathbb{Z}_{\geq1}^2 : \pbox{\textwidth}{
	$c_3vw^2 \leq u \leq U_w$ \\
	$u^{1-24\alpha} \leq c_5 v^{1+8\alpha}$ \\
	$\mu(Q(u,v,w)) \neq 0$ \\
	$(u,v) \equiv (1,d_0) \bmod 4N_E$ } \right\}. \]

Next, we replace the summand in this estimate by the slightly larger quantity
\begin{equation} \label{eq:def:Nw}
	N_w(X) = \#\left\{ (u,v) \in \mathbb{Z}_{\geq1}^2 : \pbox{\textwidth}{
		$c_3vw^2 \leq u \leq U_w$ \\
		$\mu(Q(u,v,w)) \neq 0$ \\
		$(u,v) \equiv (1,d_0) \bmod 4N_E$ } \right\},
\end{equation}
which differs from said summand by an error term of size at most
\[ \#\left\{ (u,v) \in \mathbb{Z}_{\geq1}^2 : \pbox{\textwidth}{
	$u \leq U_w$ \\
	$u^{1-24\alpha} \gg v^{1+8\alpha}$ } \right\} 
	\ll (X^{1/2}w)^{(1-8\alpha)/(1+8\alpha)}. \]
We finally obtain
\begin{equation} \label{eq:N>sumNw}
	N(\alpha;X) \geq \sum_{\substack{w \leq c_6 X^{4\alpha} \\ w \equiv 0 \bmod 4N_E}} N_w(X) + O(X^{1/2}).
\end{equation}

\subsection{Applying inclusion-exclusion}

We now fix $w$ divisible by $4N_E$ and estimate the corresponding quantity $N_w(X)$ defined in~\eqref{eq:def:Nw}. 
From the definition of $Q(u,v,w)$ in \eqref{eq:def:Q}, it ensues that the condition $\mu(Q(u,v,w)) \neq 0$ appearing in \eqref{eq:def:Nw} can be reformulated as 
\[ \mu(v)\mu(F(u,vw^2)) \neq 0, \]
and we will apply a square-free sieve argument to the condition $\mu(F(u,vw^2)) \neq 0$.
Note that this condition implies $\gcd(u,vw) = 1$, a condition we will keep throughout the sieving process.

We remove the square-free condition on $F(u,vw^2)$ by summing over the integers $\ell \geq 1$ whose square divides $F(u,vw^2)$.
The inequalities $vw^2 \leq c_3vw^2 \leq u \leq U_w$ imply $F(u,vw^2) \leq c_4 U_w^3$, which restricts the range of $\ell$ to $\ell^2 \leq c_4 U_w^3$.
Let
\begin{equation} \label{eq:def:Nwl}
	N_{w,\ell}(X) = \#\left\{ (u,v) \in \mathbb{Z}_{\geq1}^2 : \pbox{\textwidth}{
		$c_3vw^2 \leq u \leq U_w$ \\
		$\mu(v) \neq 0$, $\gcd(u,vw)=1$ \\
		$F(u,vw^2) \equiv 0 \bmod \ell^2$ \\
		$(u,v) \equiv (1,d_0) \bmod 4N_E$ } \right\}, 
\end{equation}
and note that for every pair $(u,v)$ counted in this expression, one has $\gcd(\ell,w) = 1$.
An application of the inclusion-exclusion principle now gives
\begin{equation} \label{eq:Nw>sumNwl}
	N_w(X) = \sum_{\substack{\ell^2 \leq c_4 U_w^3 \\ \gcd(\ell,w)=1}} \mu(\ell) N_{w,\ell}(X).
\end{equation}

Note that all $v$ counted in~\eqref{eq:def:Nwl} are coprime to $\ell$ so summing over residue classes, that expression becomes
\[ N_{w,\ell}(X) = \mathop{\sum\sum}_{\substack{a,b \bmod \ell^2 \\ F(a,bw^2) \equiv 0 \bmod \ell^2 \\ \gcd(b,\ell) = 1}}
	 \#\left\{ (u,v) \in \mathbb{Z}_{\geq1}^2 : \pbox{\textwidth}{
	 	$c_3vw^2 \leq u \leq U_w$ \\
	 	$\mu(v) \neq 0$, $\gcd(u,vw) = 1$ \\
		$(u,v) \equiv (a,b) \bmod \ell^2$ \\
		$(u,v) \equiv (1,d_0) \bmod 4N_E$ } \right\}. \]
Shifting $a$ by a factor $bw^2$ simplifies the congruence condition and we arrive at
\begin{equation} \label{eq:Nwl-cc}
	N_{w,\ell}(X) = \sum_{\substack{a \bmod \ell^2 \\ F(a,1) \equiv 0 \bmod \ell^2}}
		 \#\left\{ (u,v) \in \mathbb{Z}_{\geq1}^2 : \pbox{\textwidth}{
		 	$c_3vw^2 \leq u \leq U_w$ \\
			$\mu(v) \neq 0$, $\gcd(u,vw) = 1$ \\
			$u \equiv avw^2 \bmod \ell^2$ \\
			$(u,v) \equiv (1,d_0) \bmod 4N_E$ } \right\}.
\end{equation}

\subsection{Reducing the range of $\ell$}

In this section, we show that the range of $\ell$ in~\eqref{eq:Nw>sumNwl} can be reduced at a negligible cost.
Recall the definition of $\rho(n)$ in~\eqref{eq:def:rho}. We begin with a simple upper bound on the size of $N_{w,\ell}(X)$.

\begin{lemma} \label{lem:UB:Nwl}
	One has
	\[ N_{w,\ell}(X) \ll \left(\frac{X^{1/2}}{w \ell^2} + 1\right) \rho(\ell^2). \]
\end{lemma}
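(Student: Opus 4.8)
The plan is to bound $N_{w,\ell}(X)$ starting from the expression~\eqref{eq:Nwl-cc}. The outer sum has exactly $\rho(\ell^2)$ terms by the definition~\eqref{eq:def:rho} of $\rho$, since $\ell$ is coprime to $4N_E$ by the restrictions imposed before~\eqref{eq:Nw>sumNwl}. So it suffices to show that for each admissible residue $a \bmod \ell^2$, the inner count of pairs $(u,v)$ is $\ll X^{1/2}/(w\ell^2) + 1$, uniformly in $a$, $w$ and $\ell$. First I would drop the conditions $\mu(v) \neq 0$, $\gcd(u,vw) = 1$ and the congruence modulo $4N_E$, all of which only shrink the set, keeping only the region $c_3 v w^2 \leq u \leq U_w$ together with the single congruence $u \equiv a v w^2 \bmod \ell^2$ (recall $\gcd(\ell, w) = 1$, so $w^2$ is invertible mod $\ell^2$ and this is a genuine linear congruence linking $u$ and $v$).

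Next I would count lattice points in this region. For a fixed value of $v$ with $1 \leq v \leq U_w/(c_3 w^2)$, the variable $u$ ranges over an interval of length $U_w - c_3 v w^2 \leq U_w$ and is constrained to a single residue class modulo $\ell^2$; hence the number of valid $u$ is $\ll U_w/\ell^2 + 1$. Summing over the $\ll U_w/(c_3 w^2) + 1 \ll U_w/w^2 + 1$ admissible values of $v$ gives a total of
\[ \ll \left( \frac{U_w}{w^2} + 1 \right)\left( \frac{U_w}{\ell^2} + 1 \right) \ll \frac{U_w^2}{w^2 \ell^2} + \frac{U_w}{w^2} + \frac{U_w}{\ell^2} + 1. \]
Now substitute $U_w = (c_3 X w^2 / c_4)^{1/4}$ from~\eqref{eq:def:Uw}, so $U_w \ll (X w^2)^{1/4}$ and $U_w^2 \ll (X w^2)^{1/2} = X^{1/2} w$. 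The first term becomes $\ll X^{1/2} w /(w^2 \ell^2) = X^{1/2}/(w \ell^2)$, which is the main term. For the remaining three terms I would check they are all absorbed into $X^{1/2}/(w\ell^2) + 1$: indeed $U_w/w^2 \ll X^{1/4} w^{1/2}/w^2 = X^{1/4} w^{-3/2} \ll X^{1/4} \ll X^{1/2}/(w\ell^2) + 1$ after splitting on whether $w\ell^2 \leq X^{1/4}$ or not (in the latter case it is $\ll 1$; in the former $X^{1/2}/(w\ell^2) \geq X^{1/4}$); and similarly $U_w/\ell^2 \ll X^{1/4} w^{1/2}/\ell^2$, which one handles the same way. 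Alternatively, and more cleanly, one observes that whenever $X^{1/2}/(w\ell^2) < 1$ the whole bound $X^{1/2}/(w\ell^2) + 1 \asymp 1$ forces $u$ essentially unique and the cross terms are harmless, so the only regime that matters is $w\ell^2 \ll X^{1/2}$, where every term is $\ll X^{1/2}/(w\ell^2)$.

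Multiplying the per-residue bound by the number of residues $\rho(\ell^2)$ yields exactly $N_{w,\ell}(X) \ll (X^{1/2}/(w\ell^2) + 1)\rho(\ell^2)$, as claimed. I do not anticipate a genuine obstacle here: the lemma is a routine lattice-point estimate, and the only mild care needed is the bookkeeping to confirm that the three lower-order terms $U_w/w^2$, $U_w/\ell^2$ and $1$ are all dominated by $X^{1/2}/(w\ell^2) + 1$ once one splits according to whether $w\ell^2$ exceeds $X^{1/2}$; the cleanest writeup simply notes that the bound is trivially true when $X^{1/2}/(w\ell^2) < 1$, so one may assume $w\ell^2 \leq X^{1/2}$, after which every error term collapses into the main one.
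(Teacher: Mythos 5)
There is a genuine gap, and it is located exactly where you declare the argument routine. By discarding the coprimality condition $\gcd(u,vw)=1$ and counting row by row in $v$, the bound you actually obtain is $\ll U_w^2/(w^2\ell^2) + U_w/w^2$ per residue class $a$ (each of the $\ll U_w/w^2$ values of $v$ contributes a ``$+1$''), and the term $U_w/w^2 \asymp X^{1/4}w^{-3/2}$ is \emph{not} absorbed into $X^{1/2}/(w\ell^2)+1$. Your absorption step is incorrect: from $w\ell^2 > X^{1/4}$ you conclude ``it is $\ll 1$'', but that inequality bounds $X^{1/2}/(w\ell^2)$ by $X^{1/4}$, not $X^{1/4}$ by $1$. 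Concretely, take $w=1$ and $\ell^2 \asymp X^{3/8}$ (well inside the range $\ell^2 \leq c_4U_w^3 \asymp X^{3/4}$ needed in Lemma~\ref{lem:UB:Nw-large}): your bound is $\gg X^{1/4}$ while the lemma claims $\ll (X^{1/8}+1)\rho(\ell^2)$. The same objection defeats your ``cleaner'' variant: when $X^{1/2}/(w\ell^2) < 1$ the congruence $u \equiv avw^2 \bmod \ell^2$ does \emph{not} force $u$ to be essentially unique for each $v$; if $avw^2$ reduces to a small residue $t$, the lattice contains the short vector $(t,1)$ and the box can contain $\asymp U_w/w^2$ of its multiples. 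This loss is not cosmetic: in Lemma~\ref{lem:UB:Nw-large} the bound is summed over $\ell \ll U_w^{3/2}$, and an extra $U_w/w^2$ per $\ell$ would contribute $\gg X^{5/8}w^{-3/4}$, which overwhelms the target $X^{1/2}\log X$ in Proposition~\ref{prop:RQ2}.

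The missing idea is precisely the one the paper uses: keep the condition $\gcd(u,v)=1$ and invoke Heath-Brown's lattice-point lemma~\cite[Lemma~3]{MR757475}, which counts \emph{primitive} points of the lattice $u \equiv avw^2 \bmod \ell^2$ in the box and yields $\ll U_w^2/(w^2\ell^2) + 1$ with no cross terms — coprimality excludes all but one multiple of a short primitive vector, which is exactly the configuration that produces your $U_w/w^2$ term. If you want to avoid citing that lemma you would need an equivalent geometry-of-numbers argument on primitive points; the naive count alone cannot give the stated bound.
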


\begin{proof}
	Abandoning some of the conditions in \eqref{eq:Nwl-cc} and extending the range of $u$ and $v$ yields
	\begin{align*}
		N_{w,\ell}(X) 
		& \ll \sum_{\substack{a \bmod \ell^2 \\ F(a,1) \equiv 0 \bmod \ell^2}}
		 \#\left\{ (u,v) \in \mathbb{Z}^2 : \pbox{\textwidth}{
		 	$1 \leq u, c_3vw^2 \leq U_w$ \\
			$\gcd(u,v) = 1$ \\
			$u \equiv avw^2 \bmod \ell^2$ } \right\} \\
		& \ll \sum_{\substack{a \leq \ell^2 \\ F(a,1) \equiv 0 \bmod \ell^2}}
		 \#\left\{ (u,v,k) \in \mathbb{Z}^3 : \pbox{\textwidth}{
			$|u| \leq U_w$ \\
			$|v| \ll U_w w^{-2}$ \\ 
			$|k| \ll U_w \ell^{-2}$ \\
			$\gcd(u,v,k) = 1$ \\
			$u - avw^2 - k \ell^2 = 0$ } \right\} 		
	\end{align*}
	A result of Heath-Brown~\cite[Lemma~3]{MR757475} shows that the last summand has size at most
	\[ \frac{U_w^2}{w^2\ell^2} + 1, \]
	which, combined with the definition of $U_w$ in \eqref{eq:def:Uw}, concludes the proof.
\end{proof}

An immediate consequence is the following result.

\begin{lemma} \label{lem:UB:Nw-large}
	Let $\theta > 0$. One has
 	\[ \sum_{X^\theta < \ell \ll U_w^{3/2}} \mu(\ell) N_{w,\ell}(X) \ll \frac{X^{1/2-\theta}}{w} + X^{3/8} w^{3/4}. \]
\end{lemma}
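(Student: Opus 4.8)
The plan is to feed the upper bound from Lemma~\ref{lem:UB:Nwl} directly into the sum over $\ell$ and estimate the two resulting pieces separately, using the square-free sieve estimate~\eqref{eq:sumrho} for the summatory function of $\rho$. Concretely, by Lemma~\ref{lem:UB:Nwl} one has
\[ \sum_{X^\theta < \ell \ll U_w^{3/2}} \mu(\ell) N_{w,\ell}(X) \ll \sum_{X^\theta < \ell \ll U_w^{3/2}} \left( \frac{X^{1/2}}{w\ell^2} + 1 \right) \rho(\ell^2). \]
The first term contributes $\ll \frac{X^{1/2}}{w} \sum_{\ell > X^\theta} \rho(\ell^2)/\ell^2$, and the second contributes $\ll \sum_{\ell \ll U_w^{3/2}} \rho(\ell^2)$.

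For the first piece I would invoke $\rho(\ell^2) \ll \rho(\ell)$ (noted after~\eqref{eq:rhopkrhop}) together with~\eqref{eq:sumrho} in the case $k=1$ and partial summation: since $\sum_{\ell \leq Y} \rho(\ell) \ll Y$, one gets $\sum_{\ell > X^\theta} \rho(\ell)/\ell^2 \ll X^{-\theta}$, so the first term is $\ll X^{1/2-\theta}/w$, as desired. For the second piece, recalling $U_w = (c_3 X w^2 / c_4)^{1/4} \ll X^{1/4} w^{1/2}$ from~\eqref{eq:def:Uw}, the range $\ell \ll U_w^{3/2} \ll X^{3/8} w^{3/4}$ combined with $\sum_{\ell \leq Y} \rho(\ell^2) \ll Y$ (again~\eqref{eq:sumrho}) gives $\sum_{\ell \ll U_w^{3/2}} \rho(\ell^2) \ll X^{3/8} w^{3/4}$. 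Adding the two contributions yields precisely the claimed bound $\frac{X^{1/2-\theta}}{w} + X^{3/8} w^{3/4}$.

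There is really no serious obstacle here; the lemma is a bookkeeping consequence of Lemma~\ref{lem:UB:Nwl} and the already-recorded facts~\eqref{eq:sumrho} and $\rho(\ell^2)\ll\rho(\ell)$. The only point requiring a modicum of care is making sure the "$+1$" term in Lemma~\ref{lem:UB:Nwl} does not dominate: one must check that $X^{3/8} w^{3/4}$ is genuinely the right size for $\sum_{\ell \ll U_w^{3/2}} \rho(\ell^2)$ rather than something larger, which follows because $\rho$ has average size $O(1)$ by~\eqref{eq:sumrho} and the sum has $\ll U_w^{3/2}$ terms. I would also silently absorb the $\mu(\ell)$ by the trivial bound $|\mu(\ell)| \leq 1$, since no cancellation is needed.
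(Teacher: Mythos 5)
Your proposal is correct and follows essentially the same route as the paper: insert the bound of Lemma~\ref{lem:UB:Nwl}, split into the two resulting sums, handle the first by partial summation using~\eqref{eq:sumrho}, and bound the second by the length of the range $U_w^{3/2} \ll X^{3/8}w^{3/4}$. The only cosmetic difference is that the paper applies~\eqref{eq:sumrho} directly with $k=2$ rather than passing through $\rho(\ell^2)\ll\rho(\ell)$, which is immaterial.
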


\begin{proof}
	Applying Lemma~\ref{lem:UB:Nwl}, we find
	\[ \sum_{X^\theta < \ell \ll U_w^{3/2}} \mu(\ell) N_{w,\ell}(X) \ll \frac{X^{1/2}}{w} \sum_{\ell > X^\theta} \frac{\rho(\ell^2)}{\ell^2} +  \sum_{\ell \leq U_w^{3/2}} \rho(\ell^2). \]
	Using \eqref{eq:sumrho} and, for the first sum, Abel summation, we obtain
	\begin{align*}
		\sum_{\ell > X^\theta} \frac{\rho(\ell^2)}{\ell^2} \ll \frac{1}{X^\theta}, && \sum_{\ell \ll U_w^{3/2}} \rho(\ell^2) \ll X^{3/8} w^{3/4},
	\end{align*}
	which completes the proof.
\end{proof}

Going back to~\eqref{eq:Nw>sumNwl}, Lemma~\ref{lem:UB:Nw-large} shows that for any $\theta > 0$, we have
\begin{equation} \label{eq:Nw>sumNwl-small}
	N_w(X) \geq \sum_{\substack{\ell < X^\theta \\ \gcd(\ell,w) = 1}} \mu(\ell) N_{w,\ell}(X) + O\left(\frac{X^{1/2-\theta}}{w}\right) + O(X^{3/8}w^{3/4}).
\end{equation}

\subsection{Estimating $N_{w,\ell}(X)$}

This section is dedicated to establishing a precise estimate for the quantity $N_{w,\ell}(X)$ defined in~\eqref{eq:def:Nwl} when $\gcd(\ell,w) = 1$. 
Removing the coprimality condition on $u$ in \eqref{eq:Nwl-cc}, we find
\[ N_{w,\ell}(X) = \sum_{\substack{a \bmod \ell^2 \\ F(a,1) \equiv 0 \bmod \ell^2}}
	\sum_{\substack{c_3vw^2 \leq U_w \\ \gcd(v,\ell) = 1 \\ v \equiv d_0 \bmod 4N_E}} \mu(v)^2
	\sum_{f \mid vw} \mu(f) \#\left\{ u_1 \in \mathbb{Z}_{\geq1} : \pbox{\textwidth}{
		$c_3vw^2 \leq fu_1 \leq U_w$ \\
		$fu_1 \equiv avw^2 \bmod \ell^2$ \\
		$fu_1 \equiv 1 \bmod 4N_E$ } \right\}. \]

One sees that $f$ can be inverted modulo $4N_E$, and also modulo $\ell^2$ as $f$ divides $vw$, which is coprime to $\ell$.
Recall that we assume that $4N_E$ divides $w$ so in particular, one has $\gcd(\ell, 4N_E) = 1$.
Together, these observations show that the two congruence conditions on $fu_1$ reduce to a single condition on $u_1$ modulo $4N_E\ell^2$ and we obtain the estimate
\[ \#\left\{ u_1 \in \mathbb{Z}_{\geq1} : \pbox{\textwidth}{
	$c_3vw^2 \leq fu_1 \leq U_w$ \\
	$fu_1 \equiv avw^2 \bmod \ell^2$ \\
	$fu_1 \equiv 1 \bmod 4N_E$ } \right\}
	= \frac{U_w - c_3 vw^2}{4N_E \ell^2 f} + O(1). \]
Recalling the definition of $\rho(n)$ in~\eqref{eq:def:rho}, we arrive at
\[ N_{w,\ell}(X) = \rho(\ell^2) \sum_{\substack{c_3vw^2 \leq U_w \\ \gcd(v,\ell) = 1 \\ v \equiv d_0 \bmod 4N_E}} \mu(v)^2
	\sum_{f \mid vw} \mu(f) \left\{ \frac{U_w - c_3 vw^2}{4N_E \ell^2 f} + O(1) \right\}. \]

We now define
\begin{equation} \label{eq:def:Vwl}
	V_{w,\ell}(X) = \sum_{\substack{c_3vw^2 \leq U_w \\ \gcd(v,\ell) = 1 \\ v \equiv d_0 \bmod 4N_E}} \mu(v)^2  \varphi^*(vw) ( U_w - c_3 vw^2 ),
\end{equation}
and obtain
\begin{equation} \label{eq:Nwl=Vwl}
	N_{w,\ell}(X) = \frac{\rho(\ell^2)}{4N_E \ell^2} V_{w,\ell}(X) + O_\epsilon\left( \frac{X^{1/4+\epsilon}}{w^{3/2}} \right),
\end{equation}
when $\gcd(\ell,w) = 1$, and zero otherwise. Here we have used the estimates $\rho(\ell^2) \ll_\epsilon \ell^\epsilon \ll X^\epsilon$ and $\#\{f \mid vw\} \ll_\epsilon (vw)^\epsilon \ll X^\epsilon$ to simplify the error term, and replaced $U_w$ by its definition~\eqref{eq:def:Uw}.

Define the two arithmetic functions
\begin{align} \label{eq:def:f1f2}
	f_1(n) = \sideset{}{'}\prod_{p \mid n} \left(1 + \frac{1}{p} - \frac{1}{p^2} \right)^{-1},
	&& f_2(n) = \varphi^*(w) \sideset{}{'}\prod_{p \mid n} \left( 1 - \frac{p}{p^2+2p-1} \right)^{-1},
\end{align}
where the prime indicates that the product is restricted to $p \nmid 4N_E$.
We compute $V_{w,\ell}(X)$ in the following lemma.

\begin{lemma} \label{lem:Vwl}
	Assume $\gcd(\ell, w) = 1$. There exists a constant $c_7 > 0$ such that one has
	\[ V_{w,\ell}(X) = \frac{c_7 f_2(w) f_1(\ell)}{w} X^{1/2} + O_\epsilon\left( \frac{X^{3/8}}{w^{1/4-\epsilon}} \right). \]
\end{lemma}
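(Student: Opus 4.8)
The plan is to evaluate the sum $V_{w,\ell}(X)$ defined in~\eqref{eq:def:Vwl} by splitting off the linear weight $U_w - c_3vw^2$ and reducing everything to sums of $\mu(v)^2\varphi^*(vw)$ over $v$ in an arithmetic progression, which are exactly handled by Lemma~\ref{lem:SFS:MT:v}. First I would note that since $\gcd(\ell,4N_Ew)=1$, the three conditions on $v$ (coprimality to $\ell$, congruence to $v_0$ modulo $4N_E$, and range $v \leq U_w/(c_3w^2)$) are compatible, and one can write $\varphi^*(vw) = \varphi^*(v)\varphi^*(w)\prod_{p\mid\gcd(v,w)}(1-1/p)^{-1}$; since $v$ is square-free and coprime to $\ell$ but may share factors with $w$, I would instead carry $\varphi^*(w)$ outside and treat $\varphi^*(v)$ with the extra Euler factors at primes dividing $w$ by yet another Möbius-type decomposition, or — more cleanly — absorb the constraint $\gcd(v,\ell)=1$ and the $p\mid w$ correction into the modulus $m$ and the progression of Lemma~\ref{lem:SFS:MT:v} by taking $m = w\ell$ (after removing common factors of $v$ and $w$ via summing over $g = \gcd(v,w)$, which is square-free and coprime to $\ell$ and $4N_E$). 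For each such $g$, writing $v = gv'$ with $\gcd(v',w\ell/g)=1$ reduces to the shape covered by Lemma~\ref{lem:SFS:MT:v} with $q = 4N_E$ (or $4N_Eg$) and $m = w\ell$.

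The second step is to handle the linear weight. Write $T_w = U_w/(c_3w^2)$ for the upper limit of $v$, so $V_{w,\ell}(X) = U_w \cdot A(T_w) - c_3w^2 \cdot B(T_w)$ where $A(Y) = \sum_{v\leq Y}\mu(v)^2\varphi^*(vw)[\gcd(v,\ell)=1][v\equiv v_0]$ and $B(Y) = \sum_{v\leq Y} v\,\mu(v)^2\varphi^*(vw)[\cdots]$. The sum $A(Y)$ is directly $\frac{c(w,\ell)}{4N_E}Y + O(Y^{1/2})$ by Lemma~\ref{lem:SFS:MT:v} (summed over $g\mid w$ as above), and $B(Y)$ follows from $A$ by Abel summation, giving $B(Y) = \frac{c(w,\ell)}{4N_E}\cdot\frac{Y^2}{2} + O(Y^{3/2})$ with the same constant $c(w,\ell)$. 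Here $c(w,\ell)$ is, up to the $g$-summation, $C_0(4N_E)C_1(w\ell;4N_E)$ in the notation of Lemma~\ref{lem:SFS:MT:v}, and the point is that its dependence on $\ell$ and $w$ factors multiplicatively into the Euler products $f_1(\ell)$ and $f_2(w)$ of~\eqref{eq:def:f1f2}: the factor $(1+1/p-1/p^2)^{-1}$ at primes $p\mid\ell$ comes from $C_1$, while the primes $p\mid w$ contribute $\varphi^*(w)$ together with the correction $(1 - p/(p^2+2p-1))^{-1}$ after the $\gcd(v,w)$ bookkeeping is resolved. Combining, $V_{w,\ell}(X) = (U_w T_w - c_3w^2 T_w^2/2)\cdot\frac{c(w,\ell)}{4N_E} + O(U_w T_w^{1/2} + w^2 T_w^{3/2})$.

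Finally I would substitute $U_w = (c_3Xw^2/c_4)^{1/4}$ and $T_w = U_w/(c_3w^2) = (c_3Xw^2/c_4)^{1/4}/(c_3w^2)$ to get $U_w T_w = U_w^2/(c_3w^2) \asymp X^{1/2}/w$ and $w^2T_w^2 = U_w^2/c_3^2 \asymp X^{1/2}w$... — wait, one must check the main term combines with the right $w$-power: indeed $U_w T_w - c_3 w^2 T_w^2/2 = U_w^2/(c_3 w^2) - U_w^2/(2c_3 w^2)\cdot c_3 \cdot$ — carrying the constants carefully one finds it equals a constant multiple of $U_w^2/w^2 \asymp X^{1/2}/w$, which is the claimed main term once the constant is folded into $c_7$. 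The error term $U_wT_w^{1/2} + w^2T_w^{3/2}$ works out to $O_\epsilon(X^{3/8}/w^{1/4-\epsilon})$ after inserting the values of $U_w$ and $T_w$ and the bounds $\rho$-type factors, matching the statement; the $X^\epsilon$ is absorbed from estimating the error in $A(Y)$ and from $\varphi^*$.

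The main obstacle I expect is purely bookkeeping: correctly tracking how the local factors at primes dividing $w$ versus $\ell$ distribute between $\varphi^*(w)$, the $C_0,C_1$ constants of Lemma~\ref{lem:SFS:MT:v}, and the $\gcd(v,w)$ summation, so that the final answer assembles into the specific Euler products $f_1(\ell)$ and $f_2(w)$ of~\eqref{eq:def:f1f2} rather than some other but equal-looking product; this requires a patient local computation at each prime but no new idea beyond multiplicativity and Abel summation.
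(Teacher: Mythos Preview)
Your plan is correct and follows essentially the same route as the paper's proof: both arguments decompose over $g=\gcd(v,w)$ (noting $g$ is square-free and coprime to $4N_E\ell$), apply Lemma~\ref{lem:SFS:MT:v} with modulus $4N_E$ and coprimality to $g\ell$ (equivalently $w\ell$), handle the linear weight $U_w-c_3vw^2$ by Abel summation, and then reassemble the local factors into $f_1(\ell)f_2(w)$. The only cosmetic difference is the order of operations---the paper takes out $g$ first and then applies Lemma~\ref{lem:SFS:MT:v} with the weighted sum directly, whereas you split into the constant-weight sum $A$ and the $v$-weighted sum $B$ before the $g$-decomposition---but the computations and the resulting error $O_\epsilon(X^{3/8}/w^{1/4-\epsilon})$ (the $\epsilon$ coming from the divisor sum over $g\mid w$) are the same.
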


\begin{proof}
	Taking $g = \gcd(v,w)$ out, writing $v = gv_1$ and noting that $g$ and $\ell$ are necessarily coprime since we assume $\gcd(\ell, w) = 1$, the expression \eqref{eq:def:Vwl} becomes
	\[ V_{w,\ell}(X) = \sum_{g \mid w}  \sum_{\substack{c_3gv_1w^2 \leq U_w \\ \gcd(v_1,\ell) = 1 \\ gv_1 \equiv d_0 \bmod 4N_E}} \mu(gv_1)^2  \varphi^*(gv_1w) ( U_w - c_3 gv_1w^2 ). \]
	Note that one has $\gcd(v_1,g) = 1$ for every nonzero summand.
	Furthermore, recall from the beginning of Section~\ref{ssec:SFS:setup} that $d_0$ is invertible modulo $4N_E$, meaning that the sum over $v_1$ is zero unless $\gcd(g,4N_E)=1$. 
	Noting that one has $\varphi^*(gv_1w) = \varphi^\ast(v_1w) = \varphi^\ast(v_1) \varphi^*(w)$ as $g$ divides $w$ and $\gcd(v_1,w) = 1$ whenever $\mu(v_1) \neq 0$, we reach the new expression 
	\[ V_{w,\ell}(X) = \varphi^*(w) \sideset{}{'}\sum_{g \mid w} \mu(g)^2
		\sum_{\substack{c_3gv_1w^2 \leq U_w \\ \gcd(v_1,g\ell)=1 \\ v_1 \equiv d_0 \bar{g} \bmod 4N_E}} \mu(v_1)^2 \varphi^*(v_1) ( U_w - c_3 gv_1w^2 ). \]
	Here, the prime indicates that the sum is restricted to values of $g$ coprime to $4N_E$, and $\bar{g}$ denotes the inverse of $g$ modulo $4N_E$.
	Applying Lemma~\ref{lem:SFS:MT:v} and using Abel summation for the second summand, we find that the sum over $v_1$ is
	\[ \frac{C_0(4N_E) C_1(g\ell; 4N_E)}{8N_E c_3 g w^2} U_w^2 + O\left(\frac{U_w^{3/2}}{g^{1/2}w}\right), \]
	with $C_0(4N_E)$ and $C_1(g\ell; 4N_E)$ as in Lemma~\ref{lem:SFS:MT:v}.  Let
	\[ c_7 = \frac{C_0(4N_E)}{8 N_E (c_3c_4)^{1/2}} > 0, \]
	and remark that the arithmetic function $f_1(n)$ in~\eqref{eq:def:f1f2} is chosen to have $C_1(n;4N_E) = f_1(n)$.
	Replacing $U_w$ by its expression~\eqref{eq:def:Uw}, the value of the sum over $v_1$ becomes
	\[ \frac{c_7 f_1(g\ell)}{gw} X^{1/2} + O\left( \frac{X^{3/8}}{g^{1/2} w^{1/4}} \right). \]
	Recall that $\ell$ and $w$ are coprime and that $g$ divides $w$ so by multiplicativity, $f_1(g\ell) = f_1(g)f_1(\ell)$. Summing over $g$, we arrive at
	\[ V_{w,\ell}(X) = \frac{c_7 \varphi^*(w) f_1(\ell)}{w} X^{1/2} \sideset{}{'}\sum_{g \mid w} \frac{\mu(g)^2 f_1(g)}{g}
		+ O_\epsilon\left( \frac{X^{3/8}}{w^{1/4-\epsilon}} \right). \]
	The sum over $g$ satisfies
	\[ \sideset{}{'}\sum_{g \mid w} \frac{\mu(g)^2 f_1(g)}{g} = \sideset{}{'}\prod_{p \mid w} \left(1 + \frac{f_1(p)}{p} \right) = \sideset{}{'}\prod_{p \mid w} \left( 1 + \frac{p}{p^2+p-1} \right)
		= \frac{f_2(w)}{\varphi^*(w)}, \]
	which completes the proof.
\end{proof}

Write $c_8 = c_7/4N_E$. When $\gcd(\ell,w) = 1$, the equation~\eqref{eq:Nwl=Vwl} and Lemma~\ref{lem:Vwl} merge into
\begin{equation} \label{eq:Nwl-est}
	N_{w,\ell}(X) = \frac{c_8 f_2(w) \rho(\ell^2) f_1(\ell)}{w \ell^2} X^{1/2} + O(X^{3/8}).
\end{equation}
Here, the error term slightly worse than the one actually provided by the lemma. This is only to improve readability and has no consequence whatsoever, as the limitation on the range of $\alpha$ will be determined by a different error term.

\subsection{Estimating $N(\alpha;X)$}

We now have all the necessary estimates to compute a lower bound for $N_w(X)$ when $4N_E$ divides $w$, and thus also for $N(\alpha;X)$ using~\eqref{eq:N>sumNw}.
Combining~\eqref{eq:Nw>sumNwl-small} and~\eqref{eq:Nwl-est} gives, for any $\theta > 0$, the inequality
\begin{equation} \label{eq:Nw>suml}
	N_w(X) \geq \frac{c_8 f_2(w)}{w} X^{1/2} \sideset{}{'}\sum_{\substack{\ell < X^\theta \\ \gcd(\ell,w)=1}} \frac{\mu(\ell) \rho(\ell^2) f_1(\ell)}{\ell^2}
		+ O\left(\frac{X^{1/2-\theta}}{w}\right) + O(X^{3/8+\theta}) + O(X^{3/8}w^{3/4}).
\end{equation}
As previously, the prime indicates that the summation is restricted to indices coprime to $4N_E$.

Let
\begin{equation} \label{eq:def:L(w)}
	L(w) = \sideset{}{'}\sum_{\substack{\ell \geq 1 \\ \gcd(\ell,w) = 1}} \frac{\mu(\ell) \rho(\ell^2) f_1(\ell)}{\ell^2}.
\end{equation}
A simple computation shows that $L(w)$ is finite and satisfies
\begin{equation} \label{eq:suml=L(w)}
	L(w) = \sideset{}{'}\sum_{\substack{\ell \leq X^\theta \\ \gcd(\ell,w) = 1}} \frac{\mu(\ell) \rho(\ell^2) f_1(\ell)}{\ell^2} + O_\epsilon\left(\frac{1}{X^{\theta-\epsilon}}\right).
\end{equation}
We show that $L(w)$ does not vanish.

\begin{lemma} \label{lem:L(w)>c}
	There exists a constant $c_9 > 0$ satisfying $L(w) \geq c_9$ for all $w \geq 1$.
\end{lemma}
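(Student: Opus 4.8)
The plan is to express $L(w)$ as an Euler product and bound each local factor away from zero, uniformly in the prime $p$ and hence uniformly in $w$. Since $\mu$, $\rho(\cdot^2)$ and $f_1$ are all multiplicative and the summation in~\eqref{eq:def:L(w)} is over square-free $\ell$ coprime to $4N_E w$, the sum factors as
\[ L(w) = \sideset{}{'}\prod_{p \nmid w} \left( 1 - \frac{\rho(p^2) f_1(p)}{p^2} \right), \]
where the prime on the product indicates $p \nmid 4N_E$. So it suffices to show that $\rho(p^2) f_1(p)/p^2 < 1$ with room to spare, in a way that keeps $\prod_p (1 - \rho(p^2)f_1(p)/p^2)$ convergent and bounded below by a positive absolute constant.

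First I would record the two relevant bounds on the local factors. On the one hand $0 < f_1(p) = (1 + 1/p - 1/p^2)^{-1} < 1$ for every prime $p$, so the factor $f_1(p)$ only helps. On the other hand, $\rho(p^2)$ is controlled by the inputs already available in Section~\ref{ssec:SFS:prelim}: by~\eqref{eq:rhopkrhop} one has $\rho(p^2) = \rho(p) \leq 3$ for all $p \nmid \Delta_F$ (indeed $\rho(p) \le \deg F = 3$), and for the finitely many primes $p \mid \Delta_F$ one uses the Stewart-type bound~\eqref{eq:rhoSte91}, $\rho(p^2) \le 2 p^{v_p(\Delta_F)/2} + 1$, which is a fixed finite number depending only on $A$ and $B$. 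Consequently, for all but finitely many $p$ one has $\rho(p^2) f_1(p)/p^2 \le 3/p^2 \le 3/4 < 1$, so each such local factor lies in the interval $[1/4, 1)$; and for the finitely many exceptional primes the local factor is a fixed nonzero number, which I must separately check is positive — this requires $\rho(p^2) f_1(p) < p^2$, i.e.\ $\rho(p^2) < p^2(1 + 1/p - 1/p^2) = p^2 + p - 1$, which follows from~\eqref{eq:rhoSte91} together with the irreducibility hypothesis ensuring $\rho(p) \neq p$ (so in fact $\rho(p) \le 2$ when the bad reduction is not totally split, and in general the bound $2p^{v_p(\Delta_F)/2}+1 < p^2 + p - 1$ holds once $p$ is not too small, with any genuinely small bad primes handled by hand).

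Given these bounds, the product over $p \nmid w$ converges absolutely — since $\sum_p \rho(p^2)f_1(p)/p^2 \ll \sum_p 1/p^2 < \infty$ — so $L(w)$ is finite, which also justifies the tail estimate~\eqref{eq:suml=L(w)} via~\eqref{eq:sumrho} and partial summation. For the lower bound, I would write
\[ L(w) \ge \prod_{p} \min\left\{ 1 - \frac{\rho(p^2) f_1(p)}{p^2},\ 1 \right\} \cdot \prod_{\substack{p : \rho(p^2) f_1(p) \ge p^2}} (\text{nothing, this set is empty}), \]
more cleanly: since every local factor $1 - \rho(p^2)f_1(p)/p^2$ is strictly positive and all but finitely many exceed $1/4$ (in fact tend to $1$), the full product $\prod_p' (1 - \rho(p^2)f_1(p)/p^2)$ over the complete set of primes $p \nmid 4N_E$ is a positive constant $c_{10} > 0$. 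Removing the extra constraint $p \nmid w$ only enlarges the product (we delete factors that are at most $1$ and — after excluding the finitely many small bad primes — at least $1/4$), so
\[ L(w) \ge c_{10} \cdot \prod_{\substack{p \mid w \\ p \text{ small bad}}} \left( 1 - \frac{\rho(p^2)f_1(p)}{p^2} \right)^{-1} \cdot \prod_{\substack{p \mid w \\ \text{otherwise}}} 1 \gg c_{10}, \]
uniformly in $w$, which gives the claimed $c_9$. (The cleanest packaging is: $L(w) = c_{10} \prod_{p \mid w}' (1 - \rho(p^2)f_1(p)/p^2)^{-1}$, and each factor of this finite product over $p \mid w$ is $\ge 1$, hence $L(w) \ge c_{10} =: c_9 > 0$.) The main obstacle is purely the verification that none of the finitely many bad-prime local factors vanishes or goes negative; this is where the irreducibility assumption on $x^3 + Ax + B$ is genuinely used, via the consequent bound on $\rho(p)$ at primes of bad reduction, and it has to be checked that~\eqref{eq:rhoSte91} combined with that input indeed yields $\rho(p^2) < p^2 + p - 1$ for every prime $p$.
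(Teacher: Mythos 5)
Your proof is correct and follows essentially the same route as the paper: expand $L(w)$ as an Euler product, note that removing the constraint $p \nmid w$ only multiplies by finitely many factors that are at least one, and bound the remaining absolutely convergent product below using $\rho(p^2) = \rho(p) \leq 3$ for $p \nmid \Delta_F$. One small caution on your side remark: for the finitely many primes $p \mid \Delta_F$, the bound \eqref{eq:rhoSte91} alone does \emph{not} yield $\rho(p^2) < p^2 + p - 1$ when $v_p(\Delta_F)$ is large, and irreducibility is not the relevant input; the elementary bound $\rho(p^2) \leq p\,\rho(p) \leq p\min\{3,p\}$ (valid since $F(x,1)$ is monic, hence nonzero mod $p$) already gives the strict inequality for every prime.
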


\begin{proof}
	Expanding \eqref{eq:def:L(w)} as an Euler product gives
	\[ L(w) = \sideset{}{'}\prod_{p \nmid w} \left( 1 - \frac{\rho(p^2) f_1(p)}{p^2} \right) = \sideset{}{'}\prod_{p \nmid w} \left( 1 - \frac{\rho(p^2)}{p^2+p-1} \right), \]
	with the second equality following from the definition of $f_1(p)$ in \eqref{eq:def:f1f2}.
	Recall that $\Delta_F$ denotes the discriminant of the polynomial $F(x,1)$, as defined at the beginning of Section~\ref{ssec:SFS:prelim}. We further separate this product as
	\[ L(w) = \sideset{}{'}\prod_{p \nmid \Delta_F} \left( 1 - \frac{\rho(p^2)}{p^2+p-1} \right)
		\sideset{}{'}\prod_{p \mid \Delta_F} \left( 1 - \frac{\rho(p^2)}{p^2+p-1} \right)
		\sideset{}{'}\prod_{p \mid w} \left( 1 - \frac{\rho(p^2)}{p^2+p-1} \right)^{-1}. \]
	The product over the primes dividing $w$ consists only of terms at least one, and the product over the prime divisors of $\Delta_F$ is a positive constant. Combined with \eqref{eq:rhopkrhop}, this shows
	\[ L(w) \gg \sideset{}{'}\prod_{p \nmid \Delta_F} \left( 1 - \frac{\rho(p)}{p^2+p-1} \right). \]
	Since $\rho(p) \leq 3$, the right-hand side does not vanish and the proof is complete.
\end{proof}

Gathering \eqref{eq:Nw>suml}, \eqref{eq:suml=L(w)} and Lemma~\ref{lem:L(w)>c}, we obtain
\[ N_w(X) \geq \frac{c_8c_9 f_2(w)}{w} X^{1/2} + O_\epsilon\left( \frac{f_2(w)}{w}X^{1/2-\theta+\epsilon}\right) + O\left(\frac{X^{1/2-\theta}}{w}\right)  + O(X^{3/8+\theta}) + O(X^{3/8} w^{3/4}). \]
For any choice of $\theta \in (0,1/8-4\alpha)$, we return to \eqref{eq:N>sumNw}  and sum this inequality over $w$. From the definition of $f_2(n)$ in \eqref{eq:def:f1f2}, it is immediate that $\varphi^*(w) \leq f_2(w) \ll_\epsilon w^\epsilon$, and we obtain
\[ N(\alpha;X) \geq c_8c_9 X^{1/2} \sum_{\substack{w \leq c_6X^{4\alpha} \\ w \equiv 0 \bmod 4N_E}} \frac{\varphi^*(w)}{w}
	+ O(X^{1/2}), \]
since we assume $\alpha < 1/56$.
Abel summation shows that the sum over $w$ satisfies
\[ \sum_{\substack{w \leq c_6X^{4\alpha} \\ w \equiv 0 \bmod 4N_E}} \frac{\varphi^*(w)}{w} \gg \sum_{x \ll X^{4\alpha}} \frac{\varphi^*(x)}{x} \gg \log X, \]
and we obtain the lower bound $N(\alpha;X) \gg X^{1/2} \log X$, thus completing the proof of Proposition~\ref{prop:NalphaX}.

\bibliography{bibliography}
\bibliographystyle{amsalpha}

\end{document}